\DeclareRobustCommand{\shortto}{%
	\mathrel{\mathpalette\short@to\relax}%
}
\newcommand{\short@to}[2]{%
	\mkern2mu
	\clipbox{{.3\width} 0 0 0}{$\m@th#1\vphantom{+}{\shortrightarrow}$}%
}
\newcommand{\short@ta}[2]{%
	\mkern2mu
	\clipbox{{.3\width} 0 0 0}{$\m@th#1\vphantom{+}{\shortleftarrow}$}%
}
\numberwithin{equation}{section}
\newtheorem{Theorem}{Theorem}
\newtheorem{Corollary}{Corollary}
\newtheorem*{Note*}{Note}
\newtheorem{Proposition}{Proposition}
\newtheorem{Remark}{Remark}
\newtheorem*{Recall*}{Recall}
\definecolor{ao(english)}{rgb}{0.0, 0.5, 0.0}
\title{Finite-Time Ruin for the Compound Markov Binomial Risk Model}
\author{
        Zbigniew Palmowski$^{a,}$\footnote{First author e-mail address: zbigniew.palmowski@gmail.com}\,, \,  Lewis Ramsden$^{b,}$\footnote{Corresponding author e-mail address: lewis.ramsden@york.ac.uk}\, and   Apostolos D. Papaioannou$^{c,}$\footnote{Third author  e-mail address: papaion@liverpool.ac.uk}     \\
      \\ $^a$Department of Applied Mathematics\\
       Wroc\l{}aw University of Science and Technology \\
       Wroc\l{}aw, Poland; \\
        $^b$ School for Business and Society, University of York\\
       York, Yorkshire, YO10 5DD, United Kingdom;\\
       $^c$Institute for Financial and Actuarial Mathematics \\
               Department of Mathematical Sciences\\
        University of Liverpool, L69 7ZL,  United Kingdom
       }
 \date{}
\begin{document}
\maketitle

\begin{abstract}
In this paper, we study finite-time ruin probabilities for the compound Markov binomial risk model - a discrete-time model where claim sizes are modulated by a finite-state ergodic Markov chain. In the classic (non-modulated) case, the risk process has interchangeable increments and consequently, its finite-time ruin probability can be obtained in terms of Tak\'acs' famous Ballot Theorem results. Unfortunately, due to the dependency of our process on the state(s) of the modulating chain these do not necessarily extend to the modulated setting. We show that a general form of the Ballot Theorem remains valid under the stationary distribution of the modulating chain, yielding a Takács-type expression for the finite-time ruin probability which holds only when the initial surplus is equal to zero. For the case of arbitrary initial surplus, we develop an approach based on multivariate Lagrangian inversion, from which we derive distributional results for various hitting times of the risk process, including a Seal-type formula for the finite-time ruin probability.
\end{abstract}

\noindent {\bf Keywords:} Compound Markov Binomial Risk Model; Markov-Modulated Random Walk; Finite-Time Ruin Probability; Ballot Theorem; Tak\'acs Formula; Seal Formula; Multivariate Lagrangian Inversion.

\section{Introduction}

The compound binomial risk model is a discrete-time analogue of the classic Cramér - Lundberg model, for which the insurer receives a unit premium per period and claims may occur independently at each time step according to a common distribution. This discrete model offers analytical tractability and natural applicability to insurance operations evolving over fixed-time periods. Among many key quantities of interest, the finite and infinite-time ruin probabilities defined as the probability that the insurer’s surplus becomes negative within a fixed or infinite number of periods, have been considered by several authors within the literature (see for example  \cite{avram2019first, cossette2004exact, dickson1994some, gerber1988mathematical, lefevre2008finite,  li2013finite, shiu1989probability, willmot1993ruin} and references therein).  

An interesting observation that has motivated different approaches to solving the ruin problem is its intimate connection with another famous problem within applied probability, known as the ballot problem which concerns the polling results of two candidates, A and B. This problem was first considered by Takács \cite{Takacs1962} and its solution, which along with that of a more general problem are known collectively as Ballot Theorems (see \cite{Takacs1967-rp}). It can be shown that the more general ballot problem is equivalent to that of the finite-time ruin problem of the compound binomial model with zero initial surplus and thus, emits the same expression. Moreover, by considering the time-reversed compound binomial process, which can be shown to exhibit the same law as the corresponding forward process, the Ballot Theorem can also be used to derive the so-called Seal-type formula for the finite-time ruin probability with arbitrary initial surplus (see \cite{lefevre2008finite} for details). 

This paper investigates an extension of the classic compound binomial model by allowing claim sizes to be modulated by an external, time-homogeneous, ergodic Markov chain. The resulting process - commonly known as the compound Markov binomial model  - allows the claim distribution to vary over time according to the state of an external environment. Such models have been proposed to capture the influence of economic regimes, seasonal cycles, or other systemic factors on the insurance portfolio (see for example \cite{cossette2003ruin, cossette2004exact, yuen2006some}). The surplus process in this setting becomes a Markov additive process: a natural framework for modeling stochastic systems with regime switching (see \cite{palmowski2024exit, palmowski2024gerber}).

Unlike the classic case, the modulated surplus process no longer has cyclically stationary increments and the classical Ballot Theorem of Takács no longer applies. Nevertheless, we show that a more general form of the Ballot Theorem - originally due to Kallenberg \cite{kallenberg1999ballot} - remains valid under the assumption of a stationary Markov chain. Consequently, we derive a Takács-type expression for the finite-time ruin (survival) probability for the compound Markov binomial model with zero initial surplus initial in the stationary case. For general initial surplus, the situation is more complex. Unlike the classic model, time-reversal of the modulated surplus process leads to a different joint distribution, and thus the survival probability depends not only on the path of the surplus but also on the distribution of the modulating state at the time of ruin. This lack of symmetry means that the methods of \cite{lefevre2008finite} no longer hold and an alternative approach must be considered.

It is well known that Lundberg's fundamental equation and, more specifically its (non-trivial) solution, play a vital role in risk and ruin theory. In particular, \cite{li2013finite} shows that the form of Lundberg's equation is that of a Lagrangian transformation and consequently that the distribution of the upper hitting time belongs to the Lagrangian family of distributions. Moreover, by applying Lagrangian inversion methods, \cite{li2013finite} are able to recover the Seal-type formula for the finite-time ruin probability (see also \cite{avram2019first}). Based on the results of \cite{palmowski2024exit} (see also \cite{palmowski2024gerber}), we will generalise this approach and show that Lundberg's fundamental equation for the compound Markov binomial process is of the form of a multivariate Lagrangian transformation and perform multivariate Lagrangian inversion techniques to derive several distributional results relating to the modulated process, including a Seal's-type formula for the finite-time ruin probability, expressed as an explicit sum over weighted paths, incorporating the state of the environment. These methods provide a richer structural understanding of ruin events in Markov additive settings and open avenues for further analysis and numerical implementation.

The structure of the paper is as follows: In Section 2, we introduce the compound Markov binomial risk model, its connection to discrete Markov Additive Processes (Markov Additive Chains) and some important results. In Section 3, we show that the general Ballot Theorem of \cite{kallenberg1999ballot} holds for Markov-modulated risk processes in the stationary case. In Section 4, we consider the case of zero initial surplus and show the connection between the finite-time ruin probability and the Ballot Theorem result. Moreover, we develop the multivariate Lagrangian framework and derive distributional results of various hitting times, including the time to ruin. Finally, in Section 5, we show how the results of the multivariate Lagrangian inversions can be used to derive a Seal-type expression for the finite-time ruin probability with arbitrary initial surplus.

\section{Preliminaries}

Consider a discrete-time, bivariate risk process $(X,J)=\{\left(X_n,J_n\right)\}_{n \in \mathbb{N}}$, where $\{J_n\}_{n \in \mathbb{N}}$ is an ergodic Markov chain with state space $E = \{1, 2, \ldots, N\}$, probability transition matrix $\bold{P}$ and stationary distribution given by the $N$ dimensional (row) vector $\boldsymbol{\pi}$, and
\begin{equation} \label{eq:RW}
	X_n = x + n - S_n, \qquad \text{with} \qquad	S_n = \sum^{n}_{k=1}C_k,
\end{equation}
where $\{C_k\}_{k \in \mathbb{N}^+}$ is a sequence of conditionally i.i.d.\,non-negative, integer valued random variables with common (conditional) distribution $\lambda_{ij}(m):=\mathbb{P}(C_k=m, J_k=j|J_{k-1}=i)$ and $S_0=0$. In the risk theory setting, the level process $\{X_k\}_{k \in \mathbb{N}}$ represents the surplus of an insurance company having initial surplus $x \in \mathbb{N}$, receiving unit premium income at the start of each period and liable to random, integer claim payments $\{C_k\}_{k\in \mathbb{N}^+}$, that are (possibly) dependent on the state of the Markov chain $\{J_n\}_{n \in \mathbb{N}}$, representing some external influencing environment , e.g.\,weather or market conditions. This is known in the literature as the compound Markov binomial model (see \cite{cossette2004exact}, \cite{yuen2006some} and references therein). Moreover, let $\bold{\Lambda}(m)$ denote the $N\times N$ matrix with elements $\lambda_{ij}(m)$ for all $i,j \in E$, then we define the probability generating matrix
\begin{equation}\label{eq:pgf}
\widehat{\bold{\Lambda}}(z) := \mathbb{E}\left(z^{C_1};J_1 \right) = \sum_{m=0}^\infty z^m \bold{\Lambda}(m),
\end{equation}
with $\widehat{\bold{\Lambda}}(1) = \bold{P}$, where we have used the notation $\mathbb{E}\left(\cdot\,;J_1 \right)$ to represent the $N\times N$ matrix with $i,j$-th element $\mathbb{E}(\cdot\, 1_{\{J_1=j\}}|J_0=i)$ having corresponding probability matrix $\mathbb{P}(\cdot, J_1)$ with elements $\mathbb{P}(\cdot, J_1=j | J_0 = i)$ for all $i,j \in E$.  Throughout the remainder of this paper, we will assume that all elements of $\bold{\Lambda}(0)$ are strictly positive, i.e.\,$\lambda_{ij}(0) > 0$ for all $i,j \in E$ and that $\bold{\Lambda}(0)$ is invertible. 

The dependence of the random variables $\{C_k\}_{k \in \mathbb{N}^+}$ on the Markov chain $\{J_n\}_{n\in \mathbb{N}}$ means that, unlike traditional random walks,  $\{X_{n}\}_{n \in \mathbb{N}}$ only exhibits conditionally stationary and independent increments. That is, given the event $\{J_{n-1}=i\}$, it follows that 
\begin{equation*}
	\left(X_n-X_{n-1}, J_n\right) \overset{d}{=} 	\left(X_1-X_{0}, J_1\right)
\end{equation*}
given $\{J_0=i\}$. This property is known as the Markov additive property (see \cite{kyprianou2008fluctuations}) and $(X,J)$ is referred to as a Markov Additive Chain (MAC) (see \cite{palmowski2024exit}).

Of particular interest in this paper are the so-called upper and lower hitting times for $\{X_n\}_{n\in \mathbb{N}}$ of fixed levels, say $a, b \in \mathbb{Z}$ with $a > b$, defined by 
\begin{equation*}
	\tau^+_a = \min\{n \geq 0: X_{n} \geq a \} \qquad \text{and} \qquad 	\tau^-_b = \min\{n > 0: X_{n} \leq b \}. 
\end{equation*}

\noindent For the upper hitting time, $\tau^+_a$, we define the transform $\bold{G}_v(a)$, for $v \in (0,1]$, by
\begin{equation}\label{eq:series}
	\bold{G}_v(a) := \mathbb{E}\left(v^{\tau^+_a}; J_{\tau^+_a} \right) = \sum_{n=0}^\infty v^n \mathbb{P}(\tau_a^+ = n , J_{n}),
\end{equation}
which, by the Markov additive property, satisfies 
\begin{equation} \label{eq:Ga}
	\bold{G}_v(a) = \bold{G}_v^a,
\end{equation}
where $\bold{G}_v:=\bold{G}_v(1)$. Using a simple conditioning argument on the first period of time, it follows that
\begin{equation} \label{eq:upper}
	\bold{G}_v = v \sum_{m=0}^\infty \bold{\Lambda}(m)\bold{G}_v^m
\end{equation}
and thus, $\bold{G}_v$ can be seen as the (right) solution of Lundberg's equation
\begin{equation} \label{eq:Lundberg}
	\bold{G}_v =v\widehat{\bold{\Lambda}}^{\shortleftarrow}(\bold{G}_v),
\end{equation}
where $\widehat{\bold{\Lambda}}^{\shortleftarrow}(\cdot)$ represents the transform $\widehat{\bold{\Lambda}}(\cdot)$, defined in Eq.\,\eqref{eq:pgf}, with the argument acting from the right. This is an important distinction to make since $\bold{\Lambda}(\cdot)$ and $\bold{G}_v$ are not necessarily commutative and thus  $\widehat{\bold{\Lambda}}^{\shortleftarrow}(\bold{G}_v) \neq \widehat{\bold{\Lambda}}^{\shortrightarrow}(\bold{G}_v)$ - the corresponding transform with argument acting from the left. In fact, we see that the so-called left and right solutions to Lundberg's equation yield different quantities, both of which play a fundamental role in the fluctuation theory of MACs (see below and for further details \cite{Ivanovs2019, palmowski2024exit}).

\begin{Remark}
It is worth pointing out that the matrix $\bold{G}_v$, as the (right) solution of the Lundberg's equation, can be found explicitly in a few special cases or approximated via a number of numerical algorithms (see \cite{bini2005numerical} and references therein). However, this is not the aim of this paper and instead, we merely aim to exploit the form of Eq.\,\eqref{eq:upper} to obtain the following results.
\end{Remark}

\subsection{Time reversal} \label{sec:rev}

	Let us define, for some fixed $n\in \mathbb{N}$, the time-reversed processes
\begin{equation*}
	\widetilde{X}_k := X_n-X_{n-k} \qquad \text{and} \qquad \widetilde{J}_k:=J_{n-k},
\end{equation*}
for $k = 0, 1, \ldots, n$ where $\widetilde{X}_0 = 0$. Then, assuming that $\{J_k\}_{k\in \mathbb{N}}$ has stationary initial distribution $\boldsymbol{\pi}$, $\{\widetilde{J}_{k}\}_{0\leq k\leq n}$ is again a time-homogeneous Markov chain having the same initial stationary distribution and $\{\widetilde{X}_k\}_{0 \leq k \leq n}$ is also a compound Markov binomial process of the form 
\begin{equation*}
	\widetilde{X}_k = k - \widetilde{S}_k, \qquad \text{with} \qquad \widetilde{S}_k = \sum_{i=1}^k \widetilde{C}_i,
\end{equation*}
with conditional jump size distribution $\widetilde{\lambda}_{ij}(m) :=	 \frac{\pi_j}{\pi_i}\lambda_{ji}(m)$. In particular, if we define $\Delta_{\boldsymbol{\pi}}:=diag(\boldsymbol{\pi)}$, then the corresponding jump distribution matrix, denoted $\widetilde{\bold{\Lambda}}(m)$, satisfies 
\begin{equation} \label{eq:TRP}
	\widetilde{\bold{\Lambda}}(m) = \Delta_{\boldsymbol{\pi}}^{-1}\bold{\Lambda}^\top(m)\Delta_{\boldsymbol{\pi}}.
\end{equation}

\noindent Moreover, if we define
\begin{equation} \label{eq:Grev}
	\widetilde{\bold{G}}_v^a = \mathbb{E}\left(v^{\widetilde{\tau}^+_a}; \widetilde{J}_{\widetilde{\tau}^+_a}\right) = \sum_{n=0}^\infty v^n \mathbb{P}\left(\widetilde{\tau}^+_a=n, \widetilde{J}_{\widetilde{\tau}^+_a}\right),
\end{equation}
with $\widetilde{\tau}^+_a := \min\{k\in \mathbb{N}: \widetilde{X}_k \geq a \in \mathbb{N}\}$, to be the time-reversed counterpart to $\bold{G}_v$, then it follows that 
\begin{equation} \label{eqn:R}
	\bold{R}_v := \Delta_{\boldsymbol{\pi}}^{-1}\widetilde{\bold{G}}_v^\top \Delta_{\boldsymbol{\pi}}
\end{equation}
is the corresponding (left) solution of Lundberg's equation such that $\bold{R}_v = v \widehat{\bold{\Lambda}}^{\shortrightarrow}(\bold{R}_v)$, where $\widehat{\bold{\Lambda}}^{\shortrightarrow}(\cdot)$ represents the transform $\widehat{\bold{\Lambda}}(\cdot)$ with argument acting from the left (see \cite{palmowski2024exit} for details).

\begin{Remark}
The existence of the two (possibly) different matrix solutions, $\bold{G}_v$ and $\bold{R}_v$, occurs since the time-reversed process is not necessarily equal in distribution to the original process. This is due to the state dependence on the Markov chain $\{J_n\}_{n \in \mathbb{N}}$. However, in the scalar case ($N=1$), it can be shown that both processes are in fact equal in distribution (see \cite{Takacs1962} among others), in which case these two solutions coincide and are equal to the smallest solution $z \in (0,1]$ to Lundberg's fundamental equation $z=v \widehat{\bold{\Lambda}}(z)$ (with $N=1$). 
\end{Remark}

\section{Ballot-Type Theorem}

{It is well-known that discrete risk models have connections to a variety of other models and applications in applied probability, e.g., random walks, queuing theory, population dynamics etc. This is one of the main reasons for such a rich literature on the topic and continued interest in the area. One very well known model which is intimately connected is the so-called \textit{ballot problem}. 
	
The ballot problem was first considered in \cite{bertrand1887solution} and describes the probability that candidate A will always be ahead of candidate B in the polls, given a fixed number of votes, and results in an elegantly simple expression. This simple model was later generalised by Tak\'acs (\cite{Takacs1962} and \cite{Takacs1967-rp}) to any stochastic process with cyclically stationary (interchangeable) increments, a property which the classic compound binomial risk model ($N=1$) possesses. Consequently, these classic ballot-type results have seen various applications in the risk theory literature and have been used to derive a number of results concerning the so-called finite and infinite-time ruin probabilities [see \cite{lefevre2008finite} and references therein]. Unfortunately, for the general compound Markov binomial model defined in Eq.\,\eqref{eq:RW}, the dependence of the random variables $\{C_k\}_{k \in \mathbb{N}^+}$ on the external Markov chain $\{J_n\}_{n\in \mathbb{N}}$ means that the jump process $\{S_{n}\}_{n \in \mathbb{N}}$ does not exhibit cyclically stationary increments and thus, the classic ballot result of \cite{Takacs1962}, does not hold. However, Kallenberg \cite{kallenberg1999ballot} later proved that the cyclicity assumption is not necessary and that stationary increments alone is sufficient.

In the following proposition, we show that if the Markov chain $\{J_n\}_{n\in \mathbb{N}}$ has stationary initial distribution ($\boldsymbol{\pi}$), then the increments form a so-called stationary sequence, a consequence of which is that $\{S_n\}_{n \in \mathbb{N}}$ also has stationary increments.  In the following we use the notation $\mathbb{P}_{\boldsymbol{\pi}}$ to represent the distribution under the stationary initial distribution.

\begin{Proposition} \label{Prop:Increm}
	Assume that the Markov chain $\{J_n\}_{n\in \mathbb{N}}$ has stationary initial distribution $\boldsymbol{\pi}$, then $\{C_k\}_{k \in \mathbb{N}^+}$ forms a stationary sequence. That is, for every $1 \leq m \leq n$, $r \leq n-m$ and integers $0 = i_0 < i_1 < i_2 < \ldots < i_r \leq n-m$, we have  
	\begin{equation*}
		\mathbb{P}_{\boldsymbol{\pi}}\left(C_{1+i_0}, C_{1+i_1}, \ldots, C_{1+i_r}\right)=	\mathbb{P}_{\boldsymbol{\pi}}\left(C_{m+i_0}, C_{m+i_1}, \ldots, C_{m+i_r}\right) . 
	\end{equation*}
\end{Proposition}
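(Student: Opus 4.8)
The plan is to express each finite-dimensional law appearing in the statement as an explicit matrix product built from the one-step blocks $\bold{\Lambda}(\cdot)$ and the transition matrix $\bold{P}$, and then to exploit the invariance $\boldsymbol{\pi}\bold{P}=\boldsymbol{\pi}$ to show that the resulting expression depends on the observation times only through their successive gaps. Since the claimed equality of joint laws must hold for all values of the observed claims, it suffices to compare these matrix products for the two shifted index tuples.

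First I would fix values $c_0, c_1, \ldots, c_r$ and, for an arbitrary increasing tuple of indices $a_0 < a_1 < \cdots < a_r$, compute $\mathbb{P}_{\boldsymbol{\pi}}\left(C_{a_0}=c_0, \ldots, C_{a_r}=c_r\right)$. Conditioning on the full trajectory of $\{J_n\}$ and using that the event $\{C_{a_l}=c_l\}$ is carried by the transition $J_{a_l-1}\shortto J_{a_l}$ with weight $\lambda_{J_{a_l-1}J_{a_l}}(c_l)$, while between consecutive observation times the chain evolves freely, I would marginalise out all unobserved states. Because $\bold{P}=\widehat{\bold{\Lambda}}(1)=\sum_m \bold{\Lambda}(m)$ is exactly the transition matrix obtained by summing $\bold{\Lambda}(m)$ over the unconstrained claim size, each block of free evolution collapses to a power of $\bold{P}$, yielding the representation
\[
\mathbb{P}_{\boldsymbol{\pi}}\left(C_{a_0}=c_0,\ldots,C_{a_r}=c_r\right) = \boldsymbol{\pi}\,\bold{P}^{a_0-1}\,\bold{\Lambda}(c_0)\,\bold{P}^{a_1-a_0-1}\,\bold{\Lambda}(c_1)\cdots \bold{P}^{a_r-a_{r-1}-1}\,\bold{\Lambda}(c_r)\,\mathbf{1},
\]
where $\mathbf{1}$ is the all-ones column vector marginalising the terminal state, and the leading factor $\bold{P}^{a_0-1}$ accounts for the free evolution from the stationary state $J_0$ up to $J_{a_0-1}$. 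Note that strict monotonicity of the $i_l$ guarantees each exponent $a_{l+1}-a_l-1\geq 0$, so all powers are well defined.

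The key step is then to invoke stationarity: since $\boldsymbol{\pi}\bold{P}=\boldsymbol{\pi}$ we have $\boldsymbol{\pi}\bold{P}^{a_0-1}=\boldsymbol{\pi}$, so the leading factor disappears and the law depends on $(a_0,\ldots,a_r)$ only through the successive gaps $a_{l+1}-a_l$, $l=0,\ldots,r-1$. Applying this representation with the two tuples $a_l = 1+i_l$ and $a_l = m+i_l$, whose successive gaps both equal $i_{l+1}-i_l$, the two matrix products coincide term by term. As $c_0,\ldots,c_r$ were arbitrary, the full finite-dimensional distributions agree, which is precisely the asserted stationarity of $\{C_k\}_{k\in\mathbb{N}^+}$; the stated consequence that $\{S_n\}$ has stationary increments follows since its increments are sums of consecutive $C_k$.

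I expect the main obstacle to be the careful bookkeeping in establishing the matrix-product representation rather than the stationarity argument itself, which is immediate once the representation is in place. In particular, the delicate points are getting the exponents of $\bold{P}$ right (the $-1$ shifts arise because $C_{a_l}$ is attached to the transition \emph{into} time $a_l$, i.e.\ to the pair $(J_{a_l-1},J_{a_l})$ rather than to a single state), and verifying rigorously that summing over the unobserved states on a block of $k$ free steps produces exactly $\bold{P}^{k}$ via $\bold{P}=\widehat{\bold{\Lambda}}(1)$. Once these identifications are confirmed, absorbing the initial factor through $\boldsymbol{\pi}\bold{P}=\boldsymbol{\pi}$ completes the proof.
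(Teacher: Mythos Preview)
Your proposal is correct and follows essentially the same approach as the paper: both arguments express the joint law of $(C_{m+i_0},\ldots,C_{m+i_r})$ by conditioning on the Markov chain states at the relevant transition times, obtain a product of $\bold{\Lambda}(c_l)$ blocks separated by powers of $\bold{P}$ encoding the gaps $i_{l+1}-i_l$, and then invoke $\boldsymbol{\pi}\bold{P}=\boldsymbol{\pi}$ to eliminate the dependence on the shift $m$. The only difference is cosmetic---you write the computation as a single matrix product from the outset, whereas the paper carries out the same reduction element-wise via iterated conditioning on the pairs $(J_{m+i_l-1},J_{m+i_l})$.
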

\begin{proof}
	See Appendix. 
\end{proof} 

\noindent Given that $\{S_n\}_{n \in \mathbb{N}}$ has stationary increments, we can apply similar arguments to those of Kallenberg \cite{kallenberg1999ballot} to derive a Ballot Theorem type result for the compound Markov binomial risk model, given by the following theorem. It is worth noting that although this paper is only concerned with integer jump sizes, for the sake of completeness, we prove the following theorem holds for general jumps (discrete and continuous).

\begin{Theorem}\label{PropBallottheorem}
	Assume that the Markov chain $\{J_n\}_{n \in E}$ has stationary initial distribution $\boldsymbol{\pi}$. Then, for $S_n = C_1 + \cdots + C_n$, with $C_k \in \mathbb{R}^+\cup \{0\}$, we have
	\begin{equation}\label{ballotidentity}\mathbb{P}_{\boldsymbol{\pi}}\left(\max_{0<k\leq n}(
		S_k-k) < 0 \big| S_n\right)=
		1-\frac{S_n}{n},\end{equation}
	for $S_n < n$ and $0$ otherwise. 
\end{Theorem}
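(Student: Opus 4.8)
The plan is to adapt Kallenberg's proof of the ballot theorem for stationary (rather than merely interchangeable) sequences, the point being that Proposition~\ref{Prop:Increm} supplies precisely the hypothesis his argument needs. First I would record the reduction. Writing $R_k := k - S_k = \sum_{i=1}^k (1 - C_i)$ for the reserve walk, the event in question is
\begin{equation*}
\Big\{\max_{0 < k \le n}(S_k - k) < 0\Big\} = \{R_k > 0 \text{ for all } 1 \le k \le n\},
\end{equation*}
so, since $R_n = n - S_n$, the assertion \eqref{ballotidentity} is equivalent to
\begin{equation*}
\mathbb{P}_{\boldsymbol{\pi}}\big(R_k > 0,\ 1 \le k \le n \,\big|\, R_n\big) = \Big(\tfrac{R_n}{n}\Big)^+ \quad \text{a.s.}
\end{equation*}
Two structural features drive the proof: by Proposition~\ref{Prop:Increm} the increments $1 - C_k$ form a stationary sequence under $\mathbb{P}_{\boldsymbol{\pi}}$, and since $C_k \ge 0$ the walk is \emph{skip-free upward}, i.e. $R_k - R_{k-1} \le 1$.

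I would treat the integer-valued case first, where the combinatorial core is transparent. Fix a realization with $R_n = t \ge 1$ and consider the $n$ cyclic rotations of the increment block $(1 - C_1, \dots, 1 - C_n)$. The (deterministic) cycle lemma for upward skip-free paths states that exactly $t$ of these $n$ rotations keep all partial sums strictly positive; this is the count that, after normalization, produces the factor $t/n = 1 - S_n/n$. The delicate step is converting this pathwise count into the conditional probability. For interchangeable increments --- the setting of Tak\'acs \cite{Takacs1962}, which the classic compound binomial model ($N=1$) enjoys --- the conditional law given $S_n$ is cyclically symmetric and the conversion is immediate. Under mere stationarity this symmetry fails, and bridging the gap is exactly the content of Kallenberg's theorem: passing to the two-sided stationary extension $\{C_k\}_{k \in \mathbb{Z}}$ and exploiting shift-invariance (a mass-transport/Palm averaging over the starting index, equivalently an ergodic average of the density of ``good'' windows) one shows that the conditional probability equals the normalized expected number of good rotations, which by the cycle lemma is $\mathbb{E}_{\boldsymbol{\pi}}[\,t \mid R_n\,]/n = R_n/n$. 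Values $S_n \ge n$ give $t \le 0$ and no good rotation, yielding the stated $0$.

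To remove the integrality assumption I would use a standard discretization, approximating each non-negative jump $C_k$ from above by lattice-valued jumps (the cycle lemma holds verbatim for upward skip-free lattice paths, and stationarity is preserved), apply the lattice result, and pass to the limit as the mesh tends to $0$, invoking right-continuity of the relevant distribution in $S_n$ and dominated convergence to transfer the identity $1 - S_n/n$; the boundary set $\{S_n = n\}$ is handled separately, where the conditional probability is $0$.

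The main obstacle is the averaging step in the second paragraph. The cycle lemma is purely deterministic and the reduction is routine, but the whole novelty --- and the reason the classical Tak\'acs argument does not transfer --- lies in replacing cyclic interchangeability by stationarity when turning the count ``exactly $t$ good rotations'' into a conditional expectation. This is where Proposition~\ref{Prop:Increm} is indispensable and where Kallenberg's \cite{kallenberg1999ballot} shift-invariance argument must be reproduced for the present Markov-modulated sequence.
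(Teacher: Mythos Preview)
Your plan is sound and parallels the paper's proof: both reduce to Kallenberg's stationary ballot theorem via Proposition~\ref{Prop:Increm}, handle a lattice case first, and then pass to general jumps by discretisation. The only real difference is in how Kallenberg is invoked. The paper does not reproduce the cycle-lemma/shift-averaging argument you sketch; instead it embeds the discrete walk into continuous time by setting $S^d_t := S^d_k$ for $t\in[k,k+1)$, observes that this yields an a.s.\ singular, stationary, non-decreasing process, and then applies Kallenberg's Theorem~2.1 as a black box --- specifically the representation $\sup_{0<t\le n} S^d_t/t \le 1 \iff S^d_n/(nU)\le 1$ for an independent uniform $U$ --- to read off $1-S^d_n/n$ directly. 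Your route, by contrast, redevelops the mechanism behind Kallenberg's result in the discrete setting; this is more self-contained but also more work, and you correctly flag the averaging step as the crux. The approximation to general $C_k$ is handled similarly in both (you round from above, the paper approximates the distribution function on a shrinking grid and truncates), and either version goes through.
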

\begin{proof}
	Let us first prove the result for jumps which are supported on the non-negative lattice $\mathbb{N}d$ for some fixed grid $d>0$. That is, for $\{S^d_n\}_{n \in \mathbb{N}}$ which represents $\{S_n\}_{n\in \mathbb{N}}$ with $C_k\in \mathbb{N}d$ for all $k \in \mathbb{N}^+$. Moreover, let us define $S^d_t:=S^d_k$ for $t\in [k, k+1)$. The process $\{S^d_t\}_{t \geq 0}$ is clearly a non-decreasing step function with values in $\mathbb{N}d$ and, from Proposition \ref{Prop:Increm}, possesses stationary increments. As such, the corresponding probability measure defining the law of $\{S^d_t\}_{t \geq 0}$, is a.s.\,singular and stationary and thus,  by Theorem 2.1 of \cite{kallenberg1999ballot}, we have
	\begin{eqnarray*}
			\mathbb{P}_{\boldsymbol{\pi}}\left(\max_{0<k\leq n}(S^d_k-k) < 0 \big|  S^d_n\right)&=&
			\mathbb{P}_{\boldsymbol{\pi}}\left(\sup_{0 \leq t\leq n}(S^d_t-t) = 0 \big| S^d_n\right) \\
			&=&
			\mathbb{P}_{\boldsymbol{\pi}}\left(\sup_{0 < t\leq n}\left(\frac{S^d_t}{t}\right) \leq 1 \big| S^d_n\right) \\
			&=&
			\mathbb{P}_{\boldsymbol{\pi}}\left(\frac{S^d_n}{nU} \leq 1\right)=
			1-\frac{S^d_n}{n},
		\end{eqnarray*}
		when $S^d_n <  n$ ($0$ otherwise), where $U$ denotes an independent standard uniform random variable, i.e. $U \sim U[0,1]$. This proves the result in the lattice case with $d >0$.  
		
		To prove the case for general jump sizes, we will consider an approximation by taking the limit as the grid size $d\downarrow 0$. First recall that any distribution function can be decomposed into its absolutely continuous and singular parts, it then suffices to consider the limit for the absolutely continuous distribution function of generic jumps $C_k$, namely $F$ which depends on the Markov chain $\{J_n\}_{n\in \mathbb{N}}$, only. Now, note that if the support of a general jump distribution, $F$, is in $[-K,K]$ for some $K$ (for all states of the Markov chain $\{J_n\}_{n \in \mathbb{N}})$, then by a step approximation of the distribution function $F$ (Heine–Cantor theorem) it follows that one can find $F^d$ such that $\lim_{d\downarrow 0} F^d(x)=F(x)$ for all $x\in [-K,K]$. Thus, if we define $F^d$ to be the distribution function of jumps supported on the lattice with grid size $d>0$, it follows that the distribution function of $\{S^d_n\}_{n \in \mathbb{N}}$ converges weakly to the general distribution function of $\{S_n\}_{n \in \mathbb{N}}$. Hence, by cutting the jumps to $C_k\wedge K$ and using the dominated convergence theorem as $K\shortrightarrow +\infty$, we obtain the convergence for general distribution functions. In turn, this means that by the Markov property of both processes, $\{S^d_n\}_{n \in \mathbb{N}}$ converges weakly to $\{S_n\}_{n \in \mathbb{N}}$ as $d\downarrow 0$. Moreover, since the supremum on a finite interval is a continuous functional, then by the so-called continuous mapping theorem we can also conclude that $\max_{k\leq n}(S^d_k-k)$ converges to $\max_{k\leq n}(S_k-k)$ as $d\downarrow 0$. As such, both sides of \eqref{ballotidentity} convergence and the statement holds for a general random walk with general jump sizes $C_k \in \mathbb{R}^+\cup \{0\}$.
	\end{proof}

\noindent Although the above result is of interest in its own right, we are primarily concerned with its application in risk theory. The rest of this paper will be concerned with the time to ruin for the general compound Markov binomial model, i.e. $\tau^-_0$. 

\section{Finite-Time Ruin - $x = 0$}

In this section, we consider the distribution of the time to ruin ($\tau^-_0$) in the case of zero initial surplus, i.e. $x=0$. Under this assumption, we can employ the Ballot-type result of Theorem \ref{PropBallottheorem} to determine the finite-time ruin probability in the form of a Tak\`acs-type formula (see \cite{Takacs1962}). However, unlike the unmodulated case, it is not possible to employ this result for the case of general initial surplus ($x \geqslant 0$) (see for example \cite{lefevre2008finite}). As such, in this section we also derive a result for the distribution of the time to ruin based on a multivariate generalisation of the Lagrange inversion considered in \cite{li2013finite} (see also \cite{avram2019first}), which can be employed to derive an expression for the finite-time ruin probability with general initial surplus.

\begin{Proposition}[Tak\'acs-Type Formula] \label{Prop:Ruin0} Assume that $\{J_n\}_{n\in \mathbb{N}}$ has stationary initial distribution $\boldsymbol{\pi}$. Then, it follows that $\mathbb{P}_{\boldsymbol{\pi}}(\tau^-_0 = 1) = \mathbb{P}_{\boldsymbol{\pi}}(C_1 \geq 1)$ and for $n \in \mathbb{N}^+$, the finite-time survival probability is given by  
	\begin{equation} \label{eq:ruin0} 
		\mathbb{P}_{\boldsymbol{\pi}}(\tau_0^- \geq n+1) = \frac{1}{n}\sum_{m=0}^{n-1} (n-m) \boldsymbol{\pi}\bold{\Lambda}^{*n}(m)\vec{\boldsymbol{e}},
	\end{equation}
	where $\vec{\boldsymbol{e}}$ is an $N$-dimensional (column) vector of units and $\bold{\Lambda}^{*n}(\cdot)$ denotes the $n$-th fold convolution of the claim size distribution matrix $\bold{\Lambda}(\cdot)$.
\end{Proposition}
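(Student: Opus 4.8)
The plan is to reduce the survival event to the ballot event of Theorem \ref{PropBallottheorem} and then integrate the resulting conditional identity against the (stationary) law of $S_n$. Since the initial surplus is $x=0$, the surplus process reads $X_k = k - S_k$, so the single-period statement is immediate: $\mathbb{P}_{\boldsymbol{\pi}}(\tau_0^- = 1) = \mathbb{P}_{\boldsymbol{\pi}}(X_1 \leq 0) = \mathbb{P}_{\boldsymbol{\pi}}(1 - C_1 \leq 0) = \mathbb{P}_{\boldsymbol{\pi}}(C_1 \geq 1)$, directly from the definition of $\tau_0^-$. For the general survival probability, I would first observe that the event $\{\tau_0^- \geq n+1\}$ is precisely $\{X_k > 0 \text{ for all } k = 1, \ldots, n\}$; because the process is integer valued, $X_k > 0$ is equivalent to $S_k < k$, and hence $\{\tau_0^- \geq n+1\} = \{\max_{0 < k \leq n}(S_k - k) < 0\}$.

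Next I would condition on $S_n$ and apply Theorem \ref{PropBallottheorem}, which is legitimate since, under the stationary initial distribution $\boldsymbol{\pi}$, Proposition \ref{Prop:Increm} guarantees that $\{S_n\}$ has stationary increments. Taking expectation of the conditional identity \eqref{ballotidentity} over $S_n$, and using that the conditional probability vanishes unless $S_n < n$, yields
\begin{equation*}
	\mathbb{P}_{\boldsymbol{\pi}}(\tau_0^- \geq n+1) = \mathbb{E}_{\boldsymbol{\pi}}\!\left[\Big(1 - \tfrac{S_n}{n}\Big)\mathbf{1}_{\{S_n < n\}}\right] = \sum_{m=0}^{n-1}\Big(1 - \tfrac{m}{n}\Big)\,\mathbb{P}_{\boldsymbol{\pi}}(S_n = m).
\end{equation*}
The upper summation limit $n-1$ is forced by the constraint $S_n < n$, which matches the statement.

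The remaining step is to identify the scalar probability $\mathbb{P}_{\boldsymbol{\pi}}(S_n = m)$ with the matrix expression $\boldsymbol{\pi}\bold{\Lambda}^{*n}(m)\vec{\boldsymbol{e}}$. This rests on the Markov additive structure of $(X,J)$: the $(i,j)$ entry of $\bold{\Lambda}^{*n}(m)$ equals $\mathbb{P}(S_n = m, J_n = j \mid J_0 = i)$, which follows by induction on $n$ from the matrix convolution $\bold{\Lambda}^{*n}(m) = \sum_{\ell=0}^{m}\bold{\Lambda}^{*(n-1)}(\ell)\,\bold{\Lambda}(m-\ell)$ together with the conditional independence of the increments given the chain. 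Left-multiplication by $\boldsymbol{\pi}$ then weights the initial state $J_0$ by the stationary law, while right-multiplication by $\vec{\boldsymbol{e}}$ marginalises over the terminal state $J_n$, so that $\boldsymbol{\pi}\bold{\Lambda}^{*n}(m)\vec{\boldsymbol{e}} = \mathbb{P}_{\boldsymbol{\pi}}(S_n = m)$. Substituting this into the display above and factoring out $1/n$ gives \eqref{eq:ruin0}. The main obstacle is essentially this last identification: one must keep careful track of the direction of matrix multiplication (reflecting the non-commutativity emphasised after Eq.\,\eqref{eq:Lundberg}) and verify the convolution recursion cleanly, but once the entrywise interpretation of $\bold{\Lambda}^{*n}(m)$ is established, the result follows from the already-proven Ballot Theorem with only routine bookkeeping.
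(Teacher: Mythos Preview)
Your proposal is correct and follows essentially the same route as the paper: rewrite $\{\tau_0^-\geq n+1\}$ as the ballot event $\{\max_{0<k\leq n}(S_k-k)<0\}$, condition on $S_n$, apply Theorem~\ref{PropBallottheorem}, and then identify $\mathbb{P}_{\boldsymbol{\pi}}(S_n=m)=\boldsymbol{\pi}\bold{\Lambda}^{*n}(m)\vec{\boldsymbol{e}}$. The only difference is cosmetic---you spell out the convolution recursion for $\bold{\Lambda}^{*n}$ a bit more explicitly than the paper does.
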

\begin{proof}
	The result for $\mathbb{P}_{\boldsymbol{\pi}}(\tau^-_0 = 1)$ is trivial after recalling the unit premium income per period. In this case, ruin occurs if a non-zero claim arrives during this first period. On the other hand, for $n \in \mathbb{N}^+$ we note that for $x=0$, the risk process $X_k = k-S_k$ and thus, by considering the reflected process (at zero), i.e.\,$\{-X_k\}_{k\in \mathbb{N}}$, it is straightforward to see that 
	\begin{eqnarray*}
		\mathbb{P}_{\boldsymbol{\pi}}(\tau_0^- \geq n+1) 
		&=& \sum_{m=0}^{n-1} \mathbb{P}_{\boldsymbol{\pi}}\left(\max_{0<k\leq n}\left(S_k-k\right)<0, S_n = m\right) \\
		&=& \sum_{m=0}^{n-1} \mathbb{P}_{\boldsymbol{\pi}}\left(\max_{0<k\leq n}\left(S_k-k\right)<0 \big| S_n = m\right) \mathbb{P}_{\boldsymbol{\pi}}(S_n=m) \\
		&=& \sum_{m=0}^{n-1} \frac{n-m}{n} \,\mathbb{P}_{\boldsymbol{\pi}}(S_n=m),
	\end{eqnarray*}
	where the last equality follows from Theorem \ref{PropBallottheorem}. The result follows after noting that 
	\begin{eqnarray*}
		\mathbb{P}_{\boldsymbol{\pi}}(S_n=m) &=& \sum_{i=1}^N \sum_{j=1}^N \pi_i \mathbb{P}(S_n=m, J_n=j \big| J_0=i) \\
		&=& \boldsymbol{\pi}\bold{\Lambda}^{*n}(m)\vec{\boldsymbol{e}},
	\end{eqnarray*}
	where $\vec{\boldsymbol{e}}$ is an $N$ dimensional column vector of units and $\bold{\Lambda}^{*n}(\cdot)$ denotes the $n$-th fold convolution of $\bold{\Lambda}(\cdot)$.
\end{proof}

\begin{Remark}
	Note that when $N=1$, Proposition \ref{Prop:Ruin0} reduces to the Tak\'acs-type formula for the compound binomial risk process given, for example, by Proposition 2.2 of \cite{lefevre2008finite}.
\end{Remark}

\subsection{Multivariate Lagrangian inversion}

It is well known that Lundberg's fundamental equation plays a crucial role in risk theory, with its solution(s) being directly connected to the distribution of various hitting times (see \cite{asmussen2010ruin} and references therein). In the discrete setting, \cite{li2013finite} shows that for the compound binomial risk model, the form of Lundberg's equation identifies the upper hitting time $\tau^+_a$ as belonging to the Lagrangian family of distributions (see \cite{consul2006lagrangian}). Moreover, by applying the standard Lagrangian inversion techniques (see \cite{consul2006lagrangian} for details) and coefficient matching, the authors also derive an explicit expression for the distribution of the time to ruin. In the following, we extend these results to show that the form of Eq.\,\eqref{eq:Lundberg} identifies $\tau^+_a$, for the compound Markov binomial model, as belonging to the so-called multivariate Lagrangian family of distributions and employ inversion techniques to determine the distribution for a number of hitting times, including the time to ruin.

For convenience in the following, let us define $G_{ij}:=\left(\bold{G}_v\right)_{ij}$, $\widehat{\lambda}^{\shortleftarrow(\shortrightarrow)}_{ij}(\cdot):= \left(\widehat{\bold{\Lambda}}^{\shortleftarrow(\shortrightarrow)}(\cdot)\right)_{ij}$ and the differential operators $D_{ij} := \frac{\partial}{\partial G_{ij}}$ for all $i,j \in E$, where it is understood that these operators act on everything to their right hand side. The remaining results of this paper are direct consequences of the following proposition. 

\begin{Proposition} \label{prop:Lag}
	For some meromorphic function $f : \mathbb{R}^{N\times N} \shortrightarrow [0, \infty)$, we have
	\begin{equation} \label{eg:General}
		f\left(\bold{G}_v\right) = \sum_{n=0}^\infty v^n\rho(n), \qquad \text{with} \qquad \rho(n) = \sum_{\substack{x_{11}, \ldots, x_{NN} \geq 0; \\ x_{11} + \cdots + x_{NN} = n}} \frac{b(\vec{x}\,)}{x_{11}!\cdots x_{NN}!},
	\end{equation}
	where
	\begin{equation} \label{eq:b}
		b(\vec{x}\,) = \left[\prod_{i=1}^N \prod_{j=1}^N D_{ij}^{x_{ij}}\left(f\left(\bold{G}_v\right)\prod_{i=1}^N\prod_{j=1}^N \widehat{\lambda}^\shortleftarrow_{ij}\left(\bold{G}_v\right)^{x_{ij}} \left|\bold{I} - \bold{\Gamma}^\shortleftarrow 
		\left(\bold{G}_v\right)\right|\right)\right]_{\bold{G}_v = \bold{0}},
	\end{equation}
	with $\bold{I}$ denoting the $N^2\times N^2$ identity matrix and $\bold{\Gamma}^\shortleftarrow\left(\bold{G}_v\right)$ an $N^2 \times N^2$ matrix with elements $\gamma_{ij,kl}\left(\bold{G}_v\right) = G_{ij}D_{kl}\left\{ \ln\left[\widehat{\lambda}^\shortleftarrow_{ij}\left(\bold{G}_v\right)\right]\right\}$, for $i,j,k,l \in E$. 
\end{Proposition}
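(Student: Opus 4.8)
The plan is to recognise Lundberg's equation \eqref{eq:Lundberg} as a system of the type handled by the multivariate Lagrange inversion formula of Good (see \cite{consul2006lagrangian}) and then to collapse the resulting multi-parameter expansion onto the single parameter $v$ by a diagonal specialisation. Writing out the $(i,j)$-entry of \eqref{eq:Lundberg}, the matrix $\bold{G}_v$ solves the $N^2$ scalar equations
\begin{equation*}
	G_{ij} = v\,\widehat{\lambda}^\shortleftarrow_{ij}(\bold{G}_v), \qquad i,j\in E,
\end{equation*}
where, on treating the $N^2$ entries $G_{kl}$ as independent indeterminates, each weight $\phi_{ij}:=\widehat{\lambda}^\shortleftarrow_{ij}(\cdot) = \big[\sum_{m\ge 0}\bold{\Lambda}(m)\,(\cdot)^m\big]_{ij}$ is an honest power series in the $G_{kl}$ (the entries of the matrix power are homogeneous polynomials of degree $m$). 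Crucially, evaluating at $\bold{G}_v=\bold{0}$ gives $\phi_{ij}(\bold{0}) = \lambda_{ij}(0) > 0$ by the standing assumption, so none of the weights vanishes at the origin, which is exactly the nondegeneracy required for inversion; I would expand the meromorphic $f$ about the origin so that all coefficient extractions below are carried out formally.

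First I would decouple $v$ into $N^2$ independent parameters $\{t_{ij}\}_{i,j\in E}$ and study the enlarged system $G_{ij} = t_{ij}\,\phi_{ij}(\bold{G})$. Since $\phi_{ij}(\bold{0})\ne 0$, the implicit function theorem yields a unique solution $\bold{G}(\mathbf{t})$ analytic near $\mathbf{t}=\bold{0}$ with $\bold{G}(\bold{0})=\bold{0}$, so that $f(\bold{G}(\mathbf{t}))$ admits a power-series expansion in $\mathbf{t}$. Good's formula then equates the coefficient of $\prod_{i,j}t_{ij}^{x_{ij}}$ in $f(\bold{G}(\mathbf t))$ with the coefficient of $\prod_{i,j}G_{ij}^{x_{ij}}$ in
\begin{equation*}
	f(\bold{G})\prod_{i,j}\widehat{\lambda}^\shortleftarrow_{ij}(\bold{G})^{x_{ij}}\,\big|\bold{I}-\bold{\Gamma}^\shortleftarrow(\bold{G})\big|,
\end{equation*}
the determinant being the Jacobian arising from the change of variables $t_{ij}=G_{ij}/\phi_{ij}(\bold{G})$ in the multivariate Cauchy residue: a direct computation gives $\partial t_{ij}/\partial G_{kl}=\phi_{ij}^{-1}\big(\delta_{(ij),(kl)}-G_{ij}D_{kl}\ln\phi_{ij}\big)$, and the factors $\prod_{i,j}\phi_{ij}^{-1}$ combine with the $\prod_{i,j}\phi_{ij}^{x_{ij}+1}$ coming from $\prod t_{ij}^{-(x_{ij}+1)}$ to leave exactly $\prod_{i,j}\phi_{ij}^{x_{ij}}$ together with $\big|\bold{I}-\bold{\Gamma}^\shortleftarrow\big|$, whose $\big((i,j),(k,l)\big)$ entry is $\delta_{(ij),(kl)}-G_{ij}D_{kl}\ln\widehat{\lambda}^\shortleftarrow_{ij}=\delta_{(ij),(kl)}-\gamma_{ij,kl}(\bold{G})$. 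Converting $[\prod_{i,j}G_{ij}^{x_{ij}}]$ into the normalised derivative $\tfrac{1}{\prod x_{ij}!}\big[\prod_{i,j}D_{ij}^{x_{ij}}(\cdot)\big]_{\bold{G}=\bold{0}}$ via Taylor's theorem produces precisely the summand $b(\vec{x}\,)/(x_{11}!\cdots x_{NN}!)$ of \eqref{eq:b}.

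It then remains to restore $v$. By uniqueness of the analytic solution, setting $t_{ij}=v$ for all $i,j$ in the enlarged system reproduces the defining equation $G_{ij}=v\,\phi_{ij}(\bold{G})$ of $\bold{G}_v$, whence $\bold{G}(\mathbf t)\big|_{t_{ij}=v}=\bold{G}_v$ and $f(\bold{G}_v)=f(\bold{G}(\mathbf t))\big|_{t_{ij}=v}$. Performing this diagonal specialisation term by term collapses each monomial $\prod_{i,j}t_{ij}^{x_{ij}}$ to $v^{\,\sum_{i,j}x_{ij}}$, and grouping contributions by total degree $n=x_{11}+\cdots+x_{NN}$ delivers $f(\bold{G}_v)=\sum_{n\ge 0}v^n\rho(n)$ with $\rho(n)$ the stated sum of $b(\vec{x}\,)/(x_{11}!\cdots x_{NN}!)$ over all $\vec{x}$ with $x_{11}+\cdots+x_{NN}=n$, which is \eqref{eg:General}.

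I expect the main obstacle to be justifying the diagonal specialisation, namely that the genuine probabilistic solution $\bold{G}_v$ of \eqref{eq:Lundberg} is exactly the restriction to $t_{ij}=v$ of the multivariate Lagrangian solution $\bold{G}(\mathbf t)$. This rests on the analyticity of $\bold{G}_v$ in $v$ with $\bold{G}_0=\bold{0}$, visible from the series \eqref{eq:series}, together with uniqueness of the power-series solution guaranteed by $\phi_{ij}(\bold{0})\ne 0$. A secondary point requiring care is the consistent bookkeeping of the non-commutative matrix power when forming the weights $\phi_{ij}$ and their logarithmic derivatives entering $\bold{\Gamma}^\shortleftarrow$ (the ``$\shortleftarrow$'' convention, i.e.\,$\bold{G}$ acting from the right), and verifying that the determinant's row/column indices pair up as the $\gamma_{ij,kl}$ claimed.
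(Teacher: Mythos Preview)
Your proposal is correct and follows essentially the same route as the paper: both recognise the entrywise equations $G_{ij}=v\,\widehat{\lambda}^\shortleftarrow_{ij}(\bold{G}_v)$ as a system of multivariate Lagrangian transformations, verify the nondegeneracy $\widehat{\lambda}^\shortleftarrow_{ij}(\bold{0})=\lambda_{ij}(0)\neq 0$, invoke Good's multivariate Lagrange inversion (via \cite{consul2006lagrangian}), and then collect common powers of $v$. Your treatment is simply more explicit than the paper's---in particular the decoupling into separate parameters $t_{ij}$ followed by the diagonal specialisation $t_{ij}=v$, and the Jacobian computation producing $|\bold{I}-\bold{\Gamma}^\shortleftarrow|$, are details the paper absorbs into its citation of the inversion formula.
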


\begin{proof}
	Let us first note that by Eq.\,\eqref{eq:upper}, we have a system of $N\times N$ equations
	\begin{equation*}
		G_{ij} = v \widehat{\lambda}^\shortleftarrow_{ij}\left(\bold{G}_v\right), \qquad i,j \in E,
	\end{equation*}
	which are in the form of so-called Lagrangian transformations with $\widehat{\lambda}^\shortleftarrow_{ij}(\bold{0}) = \left(\bold{\Lambda}(0)\right)_{ij}=\lambda_{ij}(0)$ for $i,j \in E$. Thus, given that $\lambda_{ij}(0) \neq 0$ for all $i,j \in E$, for a meromorphic function $f : \mathbb{R}^{N \times N} \shortrightarrow [0,\infty)$ the multivariate Lagrangian inversion formula (see \cite{consul2006lagrangian} for details) yields
	\begin{equation*}
		f\left(\bold{G}_v\right) = \sum_{x_{11}=0}^\infty \cdots \sum_{x_{NN}=0}^\infty \frac{v^{x_{11}+\cdots + x_{NN}}}{x_{11}! \cdots x_{NN}!}b(\vec{x}\,), 
	\end{equation*}
	where $b(\vec{x}\,)$ is defined in Eq.\,\eqref{eq:b}. The result follows by collecting common powers of $v$ and re-ordering the summations.
\end{proof}

\begin{Corollary} \label{Cor:1}
	The matrix $\bold{G}^a_v$ has the multivariate Lagrange expansion 
	\begin{equation} \label{eq:G}
		\bold{G}^a_v = \sum_{n=0}^\infty v^n \bold{Q}(n, a), \qquad \text{with} \qquad \bold{Q}(n, a) = \sum_{\substack{x_{11}, \ldots, x_{NN} \geq 0; \\ x_{11} + \cdots + x_{NN} = n}} \frac{1}{x_{11}!\cdots x_{NN}!}\bold{B}(\vec{x}, a),
	\end{equation}
	and
	\begin{equation} \label{eq:B}
		\bold{B}(\vec{x}, a) =  \left[\prod_{i=1}^N \prod_{j=1}^N D_{ij}^{x_{ij}}\left(\bold{G}^a_v  \prod_{i=1}^N\prod_{j=1}^N \widehat{\lambda}^\shortleftarrow_{ij}\left(\bold{G}_v\right)^{x_{ij}} \left|\bold{I} - \bold{\Gamma}^\shortleftarrow 
		\left(\bold{G}_v\right)\right|\right)\right]_{\bold{G}_v = \bold{0}},
	\end{equation}
	where the differential operators are understood to act element-wise on $\bold{G}^a_v$.
\end{Corollary}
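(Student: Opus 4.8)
The plan is to obtain Corollary \ref{Cor:1} as a direct, entry-wise application of Proposition \ref{prop:Lag}. The first observation is that the matrix power $\bold{G}_v^a$ is, by definition of matrix multiplication, a matrix each of whose entries $\left(\bold{G}_v^a\right)_{kl}$ is a polynomial in the $N^2$ variables $\{G_{ij}\}_{i,j\in E}$; in particular, for each fixed pair $(k,l)\in E\times E$, the map $f_{kl}\left(\bold{G}_v\right):=\left(\bold{G}_v^a\right)_{kl}$ is an entire, hence meromorphic, function $\mathbb{R}^{N\times N}\shortrightarrow[0,\infty)$ (the non-negativity holding on the physical domain, where every entry of $\bold{G}_v$ is non-negative). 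Thus each $f_{kl}$ satisfies the hypotheses of Proposition \ref{prop:Lag}, and I would simply apply that proposition to all $N^2$ scalar component functions separately.

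For each $(k,l)$ this yields the expansion $\left(\bold{G}_v^a\right)_{kl} = \sum_{n=0}^\infty v^n \rho_{kl}(n)$, where $\rho_{kl}(n)$ is the quantity appearing in \eqref{eg:General} with $b(\vec{x}\,)$ replaced by the expression in \eqref{eq:b} in which $f\left(\bold{G}_v\right)$ is set equal to $f_{kl}\left(\bold{G}_v\right)=\left(\bold{G}_v^a\right)_{kl}$. The key structural point I would then exploit is that the two remaining factors inside the bracket of \eqref{eq:b}, namely the scalar product $\prod_{i,j}\widehat{\lambda}^\shortleftarrow_{ij}\left(\bold{G}_v\right)^{x_{ij}}$ and the scalar determinant $\left|\bold{I}-\bold{\Gamma}^\shortleftarrow\left(\bold{G}_v\right)\right|$, do not depend on the indices $(k,l)$, while the differential operators $D_{ij}=\partial/\partial G_{ij}$ are linear and commute with the extraction of the $(k,l)$-entry. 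Consequently, assembling the $N^2$ scalar quantities into an $N\times N$ array amounts to applying $\prod_{i,j}D_{ij}^{x_{ij}}$ element-wise to the matrix $\bold{G}_v^a$ multiplied by those common scalar factors, and then evaluating at $\bold{G}_v=\bold{0}$. This is precisely the definition of $\bold{B}(\vec{x},a)$ in \eqref{eq:B}, and collecting common powers of $v$ exactly as in Proposition \ref{prop:Lag} produces $\bold{Q}(n,a)$ and hence \eqref{eq:G}.

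The only step requiring genuine care — and the point explicitly flagged in the statement — is the element-wise interpretation of the operators acting on $\bold{G}_v^a$. Since each $D_{ij}$ is a scalar partial derivative in the variables $G_{ij}$, and extraction of the $(k,l)$-entry is a linear projection, these two operations commute; the same holds for the final evaluation at $\bold{G}_v=\bold{0}$. This guarantees that the element-wise reading of \eqref{eq:B} coincides with the scalar formula \eqref{eq:b} applied entry-by-entry, so the matrix identity is legitimate. I expect no analytic input beyond Proposition \ref{prop:Lag} to be needed: once the component functions are recognised as meromorphic and the scalar factors are seen to be index-independent, the corollary follows by linearity and reassembly alone.
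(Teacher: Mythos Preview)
Your proposal is correct and follows essentially the same approach as the paper: apply Proposition~\ref{prop:Lag} to each scalar entry $f\left(\bold{G}_v\right)=\left(\bold{G}_v^a\right)_{mn}$ and then reassemble the $N^2$ resulting expansions into matrix form. Your additional remarks on why the element-wise reading is legitimate (linearity of the $D_{ij}$, index-independence of the scalar factors) simply make explicit what the paper leaves implicit.
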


\begin{proof}
	The proof follows directly by considering functions $f\left(\bold{G}_v\right) = \left(\bold{G}_v^a\right)_{mn}$ for each $m,n \in E$ in Theorem \ref{Thm:Main} and combining the corresponding results in matrix form.
\end{proof}

\noindent Comparing the above result with the infinite summation representation for $\bold{G}_v^a$ given in Eq.\,\eqref{eq:series} produces the following result for the upper hitting time of the compound Markov binomial model which generalises Kemperman's formula \cite{kemperman1961passage} (see also \cite{avram2019first} for a discussion of these results in the scalar case).

\begin{Theorem} \label{Thm:Main}
	The distribution of the upper hitting time of the level $a\in \mathbb{N}$ can be expressed, for $n \geq a$, as 
	\begin{eqnarray} \label{eq:Gtau}
		\mathbb{P}\left( \tau^+_a = n,  J_{\tau^+_a}\right)  = \bold{Q}(n,a) ,
	\end{eqnarray}
	where $\bold{Q}(n,a )$ is defined in Eq.\,\eqref{eq:G}. 
\end{Theorem}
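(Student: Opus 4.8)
The plan is to recognise that the statement is an immediate coefficient-matching between two already-established power-series expansions of the matrix $\bold{G}_v^a$ in the variable $v$; no new construction is needed beyond invoking Corollary \ref{Cor:1} and the uniqueness of power-series coefficients.

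First I would recall from the definition in Eq.\,\eqref{eq:series}, together with the semigroup relation \eqref{eq:Ga}, that for $v \in (0,1]$ the upper-hitting-time transform has the entrywise representation
\[
\bold{G}_v^a = \mathbb{E}\left(v^{\tau^+_a}; J_{\tau^+_a}\right) = \sum_{n=0}^\infty v^n\, \mathbb{P}\left(\tau^+_a = n, J_n\right),
\]
where I use that $J_{\tau^+_a} = J_n$ on the event $\{\tau^+_a = n\}$; each entry of this series is a (possibly defective) probability generating function and hence converges for $v \in [0,1]$. On the other hand, Corollary \ref{Cor:1}, obtained by applying the multivariate Lagrange inversion of Proposition \ref{prop:Lag} to the entries $f(\bold{G}_v)=(\bold{G}_v^a)_{mn}$, supplies the alternative expansion $\bold{G}_v^a = \sum_{n=0}^\infty v^n \bold{Q}(n,a)$, with $\bold{Q}(n,a)$ as in Eq.\,\eqref{eq:G}, convergent in a neighbourhood of $v=0$.

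Since both displays present $\bold{G}_v^a$ as a convergent matrix power series in $v$ on a common interval about the origin, the uniqueness of the coefficients forces equality term by term, yielding $\mathbb{P}(\tau^+_a = n, J_n) = \bold{Q}(n,a)$ entrywise, which is the assertion. To justify the stated range $n \geq a$, I would note that the increments satisfy $X_k - X_{k-1} = 1 - C_k \leq 1$ since $C_k \geq 0$, so the level $a$ cannot be reached from $0$ in fewer than $a$ steps; hence $\mathbb{P}(\tau^+_a = n, J_n) = \bold{0}$ for $n < a$ (and $\bold{Q}(n,a)=\bold{0}$ there as well), so that the identity is only informative for $n \geq a$. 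The single point genuinely requiring care — and where I would locate the main obstacle — is the legitimacy of the coefficient comparison: one must verify that the probabilistic series and the Lagrange series share a nondegenerate disc of convergence in $v$, so that their coefficients may be equated. This is secured because the former is a matrix of probability generating functions, analytic on the open unit disc, while the latter converges near $v=0$ by the validity of the multivariate Lagrange inversion formula; once both are analytic on a common neighbourhood of $0$, coefficient uniqueness applies.
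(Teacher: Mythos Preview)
Your proposal is correct and follows essentially the same approach as the paper: both argue by matching the coefficients of $v^n$ in the two power-series representations of $\bold{G}_v^a$, namely the probabilistic expansion \eqref{eq:series} and the Lagrange expansion \eqref{eq:G} from Corollary~\ref{Cor:1}. Your additional remarks on the convergence domain and the range $n\geq a$ are more detailed than the paper's one-line proof, but the underlying idea is identical.
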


\begin{proof}
	The result follows by matching the coefficients of $v^n$ in Eqs.\,\eqref{eq:series} and \eqref{eq:G}.
\end{proof}

\noindent The result of Theorem \ref{Thm:Main} follows from the fact that Eq.\,\eqref{eq:upper} is of the form of a Lagrangian transformation. With this in mind, it is clear that $\bold{R}_v$ also satisfies a similar result which is presented in the following Corollary. 

\begin{Corollary} \label{cor:rev}
	The matrix $\bold{R}^a_v$ has the multivariate Lagrange expansion
	\begin{equation*}
		\bold{R}^a_v = \sum_{n=0}^\infty v^n \bold{V}(n,a), \quad \text{with} \quad \bold{V}(n,a) =  \sum_{\substack{x_{11}, \ldots, x_{NN} \geq 0; \\ x_{11} + \cdots + x_{NN} = n}} \frac{1}{x_{11}!\cdots x_{NN}!}\bold{C}(\vec{x}, a),
	\end{equation*}
	and
	\begin{equation*}
		\bold{C}(\vec{x}, a) =  \left[\prod_{i=1}^N \prod_{j=1}^N D_{ij}^{x_{ij}}\left(\bold{R}^a_v  \prod_{i=1}^N\prod_{j=1}^N \widehat{\lambda}^{\shortrightarrow}_{ij}\left(\bold{R}_v\right)^{x_{ij}} \left|\bold{I} - \bold{\Gamma}^\shortrightarrow
		\left(\bold{R}_v\right)\right|\right)\right]_{\bold{R}_v = \bold{0}},
	\end{equation*}
	where $\bold{\Gamma}^{\shortrightarrow}_{ij}(\cdot)$ is defined as in Proposition \ref{prop:Lag} with $\widehat{\lambda}_{ij}^\shortleftarrow(\cdot)$ replaced with the corresponding version $\widehat{\lambda}_{ij}^\shortrightarrow(\cdot)$. Moreover, we have 
	\begin{equation} \label{eq:revhitting}
		\mathbb{P}\left(\widetilde{\tau}^+_a = n, \widetilde{J}_{\widetilde{\tau}^+_a}\right) = \Delta_{\boldsymbol{\pi}}^{-1}\bold{V}^\top(n,a)\Delta_{\boldsymbol{\pi}}.
	\end{equation}
\end{Corollary}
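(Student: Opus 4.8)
The plan is to prove the two assertions in turn: first the Lagrange expansion of $\bold{R}^a_v$, which I obtain by re-running the argument of Proposition~\ref{prop:Lag} on the \emph{left} solution of Lundberg's equation, and then the hitting-time identity \eqref{eq:revhitting}, which I obtain by combining this expansion with the time-reversal relation \eqref{eqn:R}.

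First I would observe that, as recorded in Section~\ref{sec:rev}, the matrix $\bold{R}_v$ is the left solution of Lundberg's equation, namely $\bold{R}_v = v\,\widehat{\bold{\Lambda}}^{\shortrightarrow}(\bold{R}_v)$, which written entrywise is the system $R_{ij} = v\,\widehat{\lambda}^{\shortrightarrow}_{ij}(\bold{R}_v)$, $i,j\in E$. This is formally identical in structure to the system $G_{ij}=v\,\widehat{\lambda}^{\shortleftarrow}_{ij}(\bold{G}_v)$ treated in Proposition~\ref{prop:Lag}: both are $N^2$ simultaneous Lagrangian transformations in the $N^2$ unknowns, and the only difference is that each left-acting generating function is replaced by its right-acting counterpart. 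The lone hypothesis needed for the multivariate Lagrange inversion is that every transform has a nonzero constant term; since the direction of action is immaterial at the origin we have $\widehat{\bold{\Lambda}}^{\shortrightarrow}(\bold{0})=\bold{\Lambda}(0)$, so this is precisely the standing assumption $\lambda_{ij}(0)>0$ together with the invertibility of $\bold{\Lambda}(0)$. Applying Proposition~\ref{prop:Lag} with $f(\bold{R}_v)=(\bold{R}^a_v)_{mn}$ for each $m,n\in E$ and assembling the scalar identities into matrix form, exactly as in the proof of Corollary~\ref{Cor:1}, yields the stated expansion of $\bold{R}^a_v$ with $\bold{V}(n,a)$, $\bold{C}(\vec{x},a)$ and $\bold{\Gamma}^{\shortrightarrow}$ replacing $\bold{Q}(n,a)$, $\bold{B}(\vec{x},a)$ and $\bold{\Gamma}^{\shortleftarrow}$.

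For the identity \eqref{eq:revhitting}, I would raise the time-reversal relation \eqref{eqn:R} to the $a$-th power. Because $\Delta_{\boldsymbol{\pi}}$ is diagonal and invertible the similarity telescopes, and using $(M^\top)^a=(M^a)^\top$ one gets $\bold{R}^a_v=\Delta_{\boldsymbol{\pi}}^{-1}\bigl(\widetilde{\bold{G}}_v^a\bigr)^{\top}\Delta_{\boldsymbol{\pi}}$. Substituting on the right the definition \eqref{eq:Grev}, $\widetilde{\bold{G}}_v^a=\sum_{n} v^n\,\mathbb{P}(\widetilde{\tau}^+_a=n,\widetilde{J}_{\widetilde{\tau}^+_a})$, and on the left the expansion just derived, and then matching coefficients of $v^n$, gives $\bold{V}(n,a)=\Delta_{\boldsymbol{\pi}}^{-1}\,\mathbb{P}(\widetilde{\tau}^+_a=n,\widetilde{J}_{\widetilde{\tau}^+_a})^{\top}\,\Delta_{\boldsymbol{\pi}}$. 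Solving for the probability matrix and transposing, while using $\Delta_{\boldsymbol{\pi}}^{\top}=\Delta_{\boldsymbol{\pi}}$ and $(\Delta_{\boldsymbol{\pi}}^{-1})^{\top}=\Delta_{\boldsymbol{\pi}}^{-1}$, produces \eqref{eq:revhitting}.

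I expect the only genuine obstacle to be justifying that the inversion of Proposition~\ref{prop:Lag} carries over verbatim to the right-acting system, despite the non-commutativity between $\widehat{\bold{\Lambda}}$ and its matrix argument stressed in Section~2. The resolution, which I would state explicitly, is that Proposition~\ref{prop:Lag} never uses the left-action as such: it treats the $N^2$ entries as independent scalar indeterminates and each $\widehat{\lambda}^{\shortleftarrow}_{ij}$ purely as a formal power series in them with constant term $\lambda_{ij}(0)$. The right-acting transforms $\widehat{\lambda}^{\shortrightarrow}_{ij}$ form a different such family but satisfy identical hypotheses, so the inversion formula applies unchanged and is agnostic to the side of action. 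The transpose-and-power bookkeeping in the second step is routine, but worth spelling out, since the asymmetry it tracks is exactly what \eqref{eq:revhitting} is designed to capture.
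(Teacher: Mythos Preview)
Your proposal is correct and matches the paper's own proof essentially line for line: the paper also invokes ``similar reasoning to that of Corollary~\ref{Cor:1}'' for the expansion, and for \eqref{eq:revhitting} it likewise transposes the relation $\bold{R}^a_v=\Delta_{\boldsymbol{\pi}}^{-1}(\widetilde{\bold{G}}_v^a)^{\top}\Delta_{\boldsymbol{\pi}}$, substitutes \eqref{eq:Grev}, and matches coefficients of $v^n$. Your explicit remark that Proposition~\ref{prop:Lag} treats the $N^2$ entries as independent indeterminates---and is therefore agnostic to the side of action---is a helpful clarification that the paper leaves implicit.
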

\begin{proof}
	The first result follows by similar reasoning to that of Corollary \ref{Cor:1}. To prove Eq.\,\eqref{eq:revhitting}, we note that from the first part of the above result and Eqs.\,\eqref{eq:Grev}-\eqref{eqn:R}, we have
	\begin{eqnarray*}
		\sum_{n=0}^\infty v^n \bold{V}^\top(n,a) = \left(\bold{R}^a_v\right)^\top &=& \Delta_{\boldsymbol{\pi}}\widetilde{\bold{G}}_v^a \Delta_{\boldsymbol{\pi}}^{-1} \\
		&=& 
		\Delta_{\boldsymbol{\pi}}\sum_{n=0}^\infty v^n 	\mathbb{P}\left(\widetilde{\tau}^+_a = n, \widetilde{J}_{\widetilde{\tau}^+_a}\right) \Delta_{\boldsymbol{\pi}} ^{-1}.
	\end{eqnarray*}
	The result follows by multiplication of $\Delta_{\boldsymbol{\pi}}^{-1}$ and $\Delta_{\boldsymbol{\pi}}$ on the left and right hand side respectively and matching coefficients.
\end{proof}

\begin{Remark}
When $N=1$, the matrices $\bold{Q}(n,a)$ and $\bold{V}(n,a)$ are equivalent and equal to
\begin{equation*}
\bold{Q}(n,a) = \bold{V}(n,a) = \frac{a}{n} \lambda^{*n}(n-a)
\end{equation*}
(see A2 of Appendix) and Theorem \ref{Thm:Main} reduces to Proposition 3.1 of \cite{li2013finite}. The reader should keep this result in mind in the next section(s) in order to see the connection between the multivariate results of this paper and their scalar counterparts. 
\end{Remark}

\subsection{Distribution of the time to ruin}

It turns out that the result(s) of Corollary \ref{Cor:1} and, more importantly Corollary \ref{cor:rev}, also allow us to determine the distribution of the time to ruin, as can be seen in the following theorem. 

\begin{Theorem}  Assume that $\{J_n\}_{n \in \mathbb{N}}$ has stationary initial distribution $\boldsymbol{\pi}$. Then, it follows that 	\begin{equation*}
		\mathbb{P}(\tau^-_0 = 1, J_1) = \bold{P} - \bold{\Lambda}(0),
	\end{equation*}
	whilst for $n > 1$, we have 
	\begin{equation}
		\mathbb{P}\left(\tau_0^- = n, J_n \right) = \sum_{x=1}^{n-1} \bold{V}(n-1,x)\bold{P} - \sum_{x=1}^{n}\bold{V}(n,x) .
	\end{equation}
\end{Theorem}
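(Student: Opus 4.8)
The plan is to write the exact ruin-time law as a telescoping difference of two survival matrices and then identify each survival term with the reversed first-passage matrices $\bold{V}(n,x)$ furnished by Corollary \ref{cor:rev}. The case $n=1$ is immediate: since $X_0=0$ and a unit premium is collected each period, ruin at the first step happens exactly when $C_1\geq 1$, so the $(i,j)$-entry of $\mathbb{P}(\tau_0^-=1,J_1)$ equals $\mathbb{P}(C_1\geq 1,J_1=j\mid J_0=i)=P_{ij}-\lambda_{ij}(0)$, which is $\bold{P}-\bold{\Lambda}(0)$. For $n>1$ I would start from the set identity $\{\tau_0^-=n\}=\{\tau_0^-\geq n\}\setminus\{\tau_0^-\geq n+1\}$; since both events carry the same terminal index $J_n$, this gives
\begin{equation*}
	\mathbb{P}(\tau_0^-=n,J_n)=\mathbb{P}(\tau_0^-\geq n,J_n)-\mathbb{P}(\tau_0^-\geq n+1,J_n),
\end{equation*}
so that everything reduces to evaluating these two survival matrices.

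The heart of the argument is a reversal lemma asserting that, for each level $x\geq 1$,
\begin{equation*}
	\bold{U}(n,x):=\mathbb{P}\!\left(X_m>0\ \forall\,0<m\leq n,\ X_n=x,\ J_n\,\middle|\,J_0\right)=\bold{V}(n,x).
\end{equation*}
To establish this I would fix the horizon and pass to the reversed process $\widetilde{X}_k=X_n-X_{n-k}$, $\widetilde{J}_k=J_{n-k}$ of Section \ref{sec:rev}. Writing $X_m=X_n-\widetilde{X}_{n-m}$, the forward survival event with terminal value $X_n=x$ transforms into $\{\widetilde{X}_k<x\ \forall\,0\leq k<n,\ \widetilde{X}_n=x\}$; because the increments $1-C_k$ render both $X$ and $\widetilde{X}$ skip-free upward, this is precisely the first-passage event $\{\widetilde{\tau}_x^+=n\}$ with $\widetilde{J}_{\widetilde{\tau}_x^+}=J_0$. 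Under the stationary initial law the pathwise duality $\pi_i\,\mathbb{P}(\text{forward path}\mid J_0=i)=\pi_j\,\widetilde{\mathbb{P}}(\text{reversed path}\mid\widetilde{J}_0=j)$, obtained by telescoping the ratios in $\widetilde{\lambda}_{ij}(m)=\frac{\pi_j}{\pi_i}\lambda_{ji}(m)$, then yields $\Delta_{\boldsymbol{\pi}}\,\bold{U}(n,x)=\mathbb{P}\big(\widetilde{\tau}_x^+=n,\widetilde{J}_{\widetilde{\tau}_x^+}\big)^\top\Delta_{\boldsymbol{\pi}}$. Inserting Eq.\,\eqref{eq:revhitting} and unwinding the $\Delta_{\boldsymbol{\pi}}$-conjugations collapses the transposes and leaves $\bold{U}(n,x)=\bold{V}(n,x)$.

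With the lemma in place the two survival matrices follow by summing over the admissible terminal values: since $0<X_n\leq n$ on the survival event, surviving through time $n$ gives $\mathbb{P}(\tau_0^-\geq n+1,J_n)=\sum_{x=1}^{n}\bold{V}(n,x)$, whereas for $\mathbb{P}(\tau_0^-\geq n,J_n)$ I would condition at time $n-1$, note that survival through $n-1$ with terminal value $x\in\{1,\dots,n-1\}$ contributes $\sum_{x=1}^{n-1}\bold{V}(n-1,x)$, and observe that the final, unrestricted step contributes the transition matrix $\bold{P}$ because the claim $C_n$ is free and $\sum_m\bold{\Lambda}(m)=\bold{P}$; hence $\mathbb{P}(\tau_0^-\geq n,J_n)=\big(\sum_{x=1}^{n-1}\bold{V}(n-1,x)\big)\bold{P}$. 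Substituting both expressions into the telescoping relation yields the stated formula. I expect the reversal lemma to be the main obstacle: one must track the state labels through the reversal, justify the skip-free identification of ``survival with terminal value $x$'' with $\{\widetilde{\tau}_x^+=n\}$, and carry the $\Delta_{\boldsymbol{\pi}}$-conjugations so that the transpose in Eq.\,\eqref{eq:revhitting} cancels exactly; the summations and telescoping are then routine. (The formula is stated only for $n>1$ because the empty sum $\sum_{x=1}^{0}$ would misreport the certain event $\{\tau_0^-\geq 1\}$, whose contribution is in fact $\bold{P}$.)
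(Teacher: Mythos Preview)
Your argument is correct, and it follows a genuinely different route from the paper's own proof. The paper works with the generating function $\bold{\Phi}(1)=\mathbb{E}_{\boldsymbol{\pi},1}\big(v^{\tau_0^-}1_{\{\tau_0^-<\infty\}};J_{\tau_0^-}\big)$: invoking an external identity (Proposition 2 of \cite{palmowski2024gerber}) it expresses $\bold{\Phi}(1)$ through $\bold{R}_v$, simplifies algebraically using the Lundberg relation $\widehat{\bold{\Lambda}}^{\shortrightarrow}(\bold{R}_v)=v^{-1}\bold{R}_v$ and a geometric expansion of $(\bold{I}-\bold{R}_v)^{-1}$, substitutes the Lagrangian expansion of $\bold{R}_v^x$ from Corollary~\ref{cor:rev}, and matches coefficients of $v^n$ to obtain $\mathbb{P}_1(\tau_0^-=n,J_n)$; the case $x=0$ then follows from one first-step conditioning. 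Your approach instead proves a matrix reversal identity $\bold{U}(n,x)=\bold{V}(n,x)$ by the pathwise $\Delta_{\boldsymbol{\pi}}$-duality and skip-free upward hitting, reads off both survival matrices $\mathbb{P}(\tau_0^-\geq n+1,J_n)$ and $\mathbb{P}(\tau_0^-\geq n,J_n)$ by summing over terminal levels, and telescopes.

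Each approach has its merits. The paper's computation is essentially analytic and reuses machinery already in place for the Lundberg/Lagrangian framework, but it relies on an external result and on the standing invertibility assumption for $\bold{\Lambda}(0)$ (through the factor $\bold{\Lambda}(0)^{-1}$). Your route is self-contained and purely probabilistic; it never inverts $\bold{\Lambda}(0)$ and, as a by-product, isolates the pleasant identity $\mathbb{P}(X_m>0\ \forall\,0<m\le n,\,X_n=x,\,J_n\mid J_0)=\bold{V}(n,x)$, a matrix Kemperman-type statement that the paper only uses in a $\boldsymbol{\pi}$-averaged form later in the proof of Theorem~\ref{thm:seal}. The points you flag as delicate---tracking state labels through reversal and cancelling the $\Delta_{\boldsymbol{\pi}}$-conjugation against the transpose in Eq.~\eqref{eq:revhitting}---are exactly right, and your handling of them is sound.
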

\begin{proof}
	The result for $n=1$ follows directly by conditioning on the first period of time and noting that $\sum_{m=0}^\infty \bold{\Lambda}(m) = \bold{P}$. For $n >1$, let us denote $\bold{\Phi}(x) := \mathbb{E}_{\boldsymbol{\pi},x}\left(v^{\tau^-_0}1_{\{\tau^-_0 < \infty\}};J_{\tau^-_0} \right)$. Then, since it is assumed that $\bold{\Lambda}(0)$ is invertible, by Proposition 2 of \cite{palmowski2024gerber} it follows that 
	\begin{eqnarray*}
		\bold{\Phi}(1) &=& \bold{\Lambda}(0)^{-1}\sum_{x=1}^\infty \sum_{y=0}^\infty \bold{R}^x_v \bold{\Lambda}(x+y+1) \\
		&=& \bold{\Lambda}(0)^{-1} \bold{R}_v \sum_{y=2}^\infty \left(\sum_{x=0}^{y-2} \bold{R}^x_v \right)\bold{\Lambda}(y) \\
		&=& \bold{\Lambda}(0)^{-1} \bold{R}_v \sum_{y=2}^\infty \left(\bold{I}-\bold{R}_v\right)^{-1}\left(\bold{I}-\bold{R}_v^{y-1}\right)\bold{\Lambda}(y) \\
		&=& \bold{\Lambda}(0)^{-1} \bold{R}_v \left(\bold{I}-\bold{R}_v\right)^{-1} \bold{R}_v^{-1} \left[ \bold{R}_v \sum_{y=2}^\infty \bold{\Lambda}(y) -  \sum_{y=2}^\infty \bold{R}_v^{y}\bold{\Lambda}(y) \right] \\
		&=& \bold{\Lambda}(0)^{-1} \left(\bold{I}-\bold{R}_v\right)^{-1} \left[ (\bold{I}-\bold{R}_v)\bold{\Lambda}(0) - (\widetilde{\bold{\Lambda}}^{\shortleftarrow}(\bold{R}_v) - \bold{R}_v\bold{P} ) \right] \\
		&=& \bold{I} -  \bold{\Lambda}(0)^{-1} \left(\bold{I}-\bold{R}_v\right)^{-1} \bold{R}_v(v^{-1}\bold{I}- \bold{P}),
	\end{eqnarray*}
	since $\widehat{\bold{\Lambda}}^{\shortrightarrow}(\bold{R}_v) = v^{-1}\bold{R}_v$ (see Eq.\eqref{eqn:R}). Using similar arguments to Remark 4 of \cite{palmowski2024gerber}, $\bold{R}_v$ is a sub-stochastic matrix and thus all eigenvalues have absolute value less than 1. As such, by the geometric summation formula, the above is equivalent to
	\begin{eqnarray*}
		\bold{\Phi}(1) = \bold{I} -  \bold{\Lambda}(0)^{-1} \sum_{x=1}^\infty \bold{R}_v^x(v^{-1}\bold{I}- \bold{P}). 
	\end{eqnarray*}
	Now, by substituting the result of Corollary \ref{cor:rev} into the above, we have 
	\begin{eqnarray*}
		\bold{\Phi}(1) &=& \bold{I} -  \bold{\Lambda}(0)^{-1} \sum_{x=1}^\infty \sum_{n=x}^\infty v^n \bold{V}(n,x)(v^{-1}\bold{I}- \bold{P}) \notag \\
		&=& \bold{I} -  \bold{\Lambda}(0)^{-1} \sum_{n=1}^\infty v^{n-1} \sum_{x=1}^n  \bold{V}(n,x) +  \bold{\Lambda}(0)^{-1} \sum_{n=1}^\infty v^{n} \sum_{x=1}^n  \bold{V}(n,x) \bold{P} \notag \\
		&=& \sum_{n=1}^\infty v^n \bold{\Lambda}(0)^{-1}  \left(\sum_{x=1}^n \bold{V}(n,x)\bold{P} - \sum_{x=1}^{n+1}\bold{V}(n+1,x) \right), 
	\end{eqnarray*}
	since $\bold{V}(1,1)=\Delta_{\boldsymbol{\pi}}^{-1}\widetilde{\bold{\Lambda}}(0)^\top \Delta_{\boldsymbol{\pi}} = \bold{\Lambda}(0)$. Then, by noting $\bold{\Phi}(1) = \sum_{n=1}^\infty v^n \mathbb{P}_{1}(\tau^-_0 = n, J_n)$ and matching coefficients, it follows that 
	\begin{equation} \label{eq:dist1}
		\mathbb{P}_{1}(\tau^-_0 = n, J_n) = \bold{\Lambda}(0)^{-1}  \left(\sum_{x=1}^n \bold{V}(n,x)\bold{P} - \sum_{x=1}^{n+1}\bold{V}(n+1,x) \right).
	\end{equation}
	Finally, for the case that $x=0$, by conditioning on the first period of time, we note that for $n>1$
	\begin{equation*}
		\mathbb{P}(\tau^-_0 = n, J_n) = \bold{\Lambda}(0)\mathbb{P}_{1}(\tau^-_0 = n-1, J_{n-1}) 
	\end{equation*}
	and the result follows immediately by substitution of Eq.\,\eqref{eq:dist1} in the above. 
\end{proof}
\begin{Remark}
It is worth pointing out that although this section is concerned with the case of zero initial surplus ($x=0$), within the proof of the above theorem we have also derived an expression for the distribution of the time to ruin when $x=1$ in Eq.\eqref{eq:dist1}. 
\end{Remark}

\section{Finite-Time Ruin - $x\geq0$}

For $x \geq 0$, it is clear that the ballot-type theorem alone is not sufficient as this only contains information regarding the excursion above the initial point ($x$) and some additional arguments are required. One approach, considered by \cite{lefevre2008finite}, is to consider the corresponding time-reversed process for which the problem can then be expressed in terms of an upper hitting time. In the scalar case, the time-reversed process has identical distribution to that of the original process and the upper hitting time quantity (Kemperman's formula) is equivalent to the Ballot Theorem (see \cite{lefevre2008finite} for details). However, in the Markov-modulated setting, the time-reversed process does not have the same distribution as the original process. Moreover, the required upper hitting time quantity is replaced by a joint distribution with respect to the state of the Markov chain at the stopping time. These additional complexities mean that we cannot apply the same methodology as \cite{lefevre2008finite} and instead, consider a Lagrangian inversion approach by taking advantage of the form of Eq.\,\eqref{eq:upper}.

We now come to the main result of this paper, given in the following theorem, which derives a Seal-type formula for the compound Markov binomial model or upward skip-free Markov-modulated random walk. 

\begin{Theorem}[Seal-Type Formula] \label{thm:seal}
	For $n \geq 1$, we have 
	\begin{equation}\label{mainidentballot}
		\mathbb{P}_{\boldsymbol{\pi}, x}(\tau_0^-\geq
		n+1)=\boldsymbol{\pi}\left(\sum_{i=0}^{x+n-1}\bold{\Lambda}^{*n}(i) - \sum_{j=x+1}^{x+n-1} \sum_{\nu=j}^{x+n-1}  \bold{\Lambda}^{*j-x}(j)\bold{V}(n+x-j,n+x-\nu)\right)\vec{\boldsymbol{e}} .
	\end{equation}
\end{Theorem}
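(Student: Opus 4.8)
The plan is to derive the Seal-type formula by decomposing the survival event according to the value of the surplus at time $n$ and then applying a time-reversal argument to the excursion below the starting level $x$. The guiding intuition is the classical Seal decomposition: survival up to time $n+1$ starting from $x$ can be split into two contributions, namely the total probability that the surplus at time $n$ lies at some non-negative level (which accounts for all paths, regardless of ruin), minus the probability of paths that visit a negative level at some intermediate time but end at a non-negative level at time $n$. The first contribution should yield the term $\boldsymbol{\pi}\sum_{i=0}^{x+n-1}\bold{\Lambda}^{*n}(i)\vec{\boldsymbol{e}}$, since $X_n = x + n - S_n \geq 0$ is equivalent to $S_n \leq x+n$, and the constraint $S_n \leq x+n-1$ (i.e.\ $S_n < x+n$) comes from requiring strict positivity consistent with the survival indexing. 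The second contribution, the subtracted double sum, should enumerate the \emph{bad} paths by conditioning on the first downcrossing of level $0$.

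First I would fix the representation $X_k = x + k - S_k$ and write the survival probability $\mathbb{P}_{\boldsymbol{\pi},x}(\tau_0^- \geq n+1)$ as $\mathbb{P}_{\boldsymbol{\pi},x}(\min_{0<k\leq n} X_k > 0)$, which by complementation equals the probability that $X_n \geq 0$ minus the probability that $X_n \geq 0$ but the path dips to $0$ or below at some earlier time. The unconstrained term $\mathbb{P}_{\boldsymbol{\pi},x}(X_n \geq 0)$ translates directly into $\boldsymbol{\pi}\sum_{i=0}^{x+n-1}\bold{\Lambda}^{*n}(i)\vec{\boldsymbol{e}}$ via the convolution identity $\mathbb{P}_{\boldsymbol{\pi}}(S_n = i, J_n) $ summed against $\boldsymbol{\pi}$ on the left and $\vec{\boldsymbol{e}}$ on the right, exactly as in the proof of Proposition \ref{Prop:Ruin0}. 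The core work is the subtracted term: I would condition on the level $j$ at which the first passage to or below zero occurs and on the overshoot, writing the bad-path probability as a sum over the first time the process reaches each sub-level. Because the process is upward skip-free (unit premium, so $X_k$ decreases by at most a claim but increases by at most one per step), the downward passage is where the skip-free structure must be exploited, and this is where the time-reversal identity \eqref{eq:TRP} and the reversed hitting-time matrices $\bold{V}(\cdot,\cdot)$ from Corollary \ref{cor:rev} enter.

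The key technical step is to recognise the excursion from the starting level down to $0$ as an \emph{upper} hitting problem for the time-reversed process. After reversing time on the segment of the path up to the first downcrossing, the descent of $\{X_k\}$ below $x$ becomes an ascent of the reversed walk, whose hitting-time law is governed precisely by $\bold{V}(n,a)$ through Eq.\,\eqref{eq:revhitting}, i.e.\ $\mathbb{P}(\widetilde\tau^+_a = n, \widetilde J_{\widetilde\tau^+_a}) = \Delta_{\boldsymbol{\pi}}^{-1}\bold{V}^\top(n,a)\Delta_{\boldsymbol{\pi}}$. I would split the path at the level $j$ reached by time of first crossing: the factor $\bold{\Lambda}^{*j-x}(j)$ should capture the independent increments over the first $j-x$ steps bringing the jump-sum to height $j$, while $\bold{V}(n+x-j, n+x-\nu)$ encodes the reversed hitting structure of the remaining $n+x-j$ steps reaching the residual level $n+x-\nu$. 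Careful bookkeeping of the double index — summing $j$ from $x+1$ to $x+n-1$ and $\nu$ from $j$ to $x+n-1$ — is needed to align the convolution-step count with the hitting-level offset in $\bold{V}$.

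The main obstacle will be getting the reversal and the index shifts exactly right, in particular ensuring that the state of the Markov chain is tracked consistently through the reversal. Unlike the scalar case of \cite{lefevre2008finite}, the reversed process has a \emph{different} law, so every application of \eqref{eq:revhitting} injects the similarity conjugation by $\Delta_{\boldsymbol{\pi}}$; the pre-multiplication by $\boldsymbol{\pi}$ and post-multiplication by $\vec{\boldsymbol{e}}$ must be arranged so that these conjugations telescope and the stationary weights reassemble into the clean matrix product displayed in \eqref{mainidentballot}. I expect that verifying the collapse of the $\Delta_{\boldsymbol{\pi}}$ factors, together with confirming the boundary cases (the terms where $j=x+n-1$ or $\nu=j$, and the degenerate $n=1$ case where the double sum is empty and the formula must reduce to $\boldsymbol{\pi}\sum_{i=0}^{x}\bold{\Lambda}(i)\vec{\boldsymbol{e}}$), will be the most delicate part of the argument and the place where a sign or off-by-one error is easiest to make.
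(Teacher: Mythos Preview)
Your proposal correctly identifies all of the right ingredients --- time reversal under the stationary distribution, the appearance of $\bold{V}(\cdot,\cdot)$ via Corollary~\ref{cor:rev}, and the need for the $\Delta_{\boldsymbol{\pi}}$ conjugations to collapse under the scalar bracket $\boldsymbol{\pi}(\cdot)\vec{\boldsymbol{e}}$ --- but it assembles them in the wrong order, and this is a genuine gap rather than a cosmetic one.

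You propose to decompose the correction term by conditioning on the first downcrossing of the \emph{forward} process and then ``reversing time on the segment of the path up to the first downcrossing''. Two problems arise. First, that reversal is over the random interval $[0,\tau_0^-]$, and on a random window the reversed pair does not inherit the Markov additive structure needed to invoke Corollary~\ref{cor:rev}; the identity $\mathbb{P}(\widetilde{\tau}^+_a=n,\widetilde{J}_n)=\Delta_{\boldsymbol{\pi}}^{-1}\bold{V}^\top(n,a)\Delta_{\boldsymbol{\pi}}$ is only available because the reversal in Section~\ref{sec:rev} is carried out over a \emph{fixed} horizon. Second, conditioning on the forward first-passage to $\leq 0$ produces the joint law of $(\tau_0^-, X_{\tau_0^-}, J_{\tau_0^-})$ --- time, deficit and state at ruin --- which is neither $\bold{\Lambda}^{*j-x}(j)$ nor $\bold{V}(\cdot,\cdot)$, so the factor-by-factor interpretation you give (``$\bold{\Lambda}^{*j-x}(j)$ captures the first $j-x$ steps, $\bold{V}$ the remaining $n+x-j$'') does not correspond to any forward-path decomposition.

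The paper's route is to reverse the \emph{entire} window $[0,n]$ first. For each terminal value $X_n=m$, the forward survival event becomes $\{\widetilde{\tau}^+_m\geq n,\ \widetilde{X}_n=m-x\}$ for the reversed MAC. Only then does one decompose, applying the strong Markov property of the \emph{reversed} process at $\widetilde{\tau}^+_m$: this yields the identity
\[
\mathbb{P}_{\boldsymbol{\pi}}(\widetilde{X}_n=m-x)=\mathbb{P}_{\boldsymbol{\pi}}(\widetilde{\tau}^+_m\geq n,\ \widetilde{X}_n=m-x)+\sum_{k=m}^{n-1}\boldsymbol{\pi}\,\Delta_{\boldsymbol{\pi}}^{-1}\bold{V}^\top(k,m)\Delta_{\boldsymbol{\pi}}\,\widetilde{\bold{\Lambda}}^{*n-k}(n+x-k)\,\vec{\boldsymbol{e}}.
\]
The factor $\widetilde{\bold{\Lambda}}^{*n-k}(n+x-k)$ is the \emph{unconstrained continuation} of the reversed walk after its hitting time, and the product $\bold{\Lambda}^{*j-x}(j)\bold{V}(\cdot,\cdot)$ in \eqref{mainidentballot} emerges only after taking the scalar transpose and substituting $j=n+x-k$, $\nu=n+x-m$. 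So the clean matrix ordering in the final formula is an artefact of the transpose trick, not a reflection of a forward-time splitting at step $j-x$. If you reorganise your argument so that the reversal comes first (over $[0,n]$) and the hitting decomposition is performed on $\widetilde{X}$, the rest of your outline --- the telescoping of the $\Delta_{\boldsymbol{\pi}}$ factors and the index bookkeeping --- goes through essentially as you describe.
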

\begin{proof}

Let us consider the reversed time processes $(\widetilde{X}, \widetilde{J}\,)$. Then, it follows that for $0 < m \leq  x+n$ and $x \in  \mathbb{N} $, we have
\begin{eqnarray*}
	\mathbb{P}_{\boldsymbol{\pi}}\left( X_k > 0 \,\, \forall 0 < k < n, X_n = m \big| X_0 = x\right) = \mathbb{P}_{\boldsymbol{\pi}}\left( \widetilde{\tau}^{\,+}_m \geq n, \widetilde{X}_n = m-x \right),
\end{eqnarray*}
where $\widetilde{\tau}^+_m$ was defined in Section \ref{sec:rev}.
Moreover, for $m,x \in \mathbb{N}$, we also have 
\begin{eqnarray*}
	\mathbb{P}_{\boldsymbol{\pi}}\left(\widetilde{X}_n  = m-x   \right)  &=& 	\mathbb{P}_{\boldsymbol{\pi}}\left(\widetilde{\tau}^{\,+}_m \geq n, \widetilde{X}_n  = m-x   \right) \\
	&&+ \sum_{k=m}^{n-1}\sum_{j=1}^N \mathbb{P}_{\boldsymbol{\pi}}\left(\widetilde{\tau}^{\,+}_m = k, \widetilde{J}_{\widetilde{\tau}^{\,+}_m} = j  \right) \\
	&&\hspace{5mm}\times \mathbb{P}\left(\widetilde{X}_{n-k}  =-x |\widetilde{J}_{0} = j	\right),
\end{eqnarray*}

after applying the Markov additive property in the last term, and thus

\begin{eqnarray} \label{eq:genx1}
\mathbb{P}_{\boldsymbol{\pi}}\left(\widetilde{\tau}^{\,+}_m \geq n, \widetilde{X}_n  = m-x   \right) &=& 	\mathbb{P}_{\boldsymbol{\pi}}\left(\widetilde{X}_n  = m-x   \right) \notag  \\
	&&- \sum_{k=m}^{n-1}\sum_{j=1}^N \mathbb{P}_{\boldsymbol{\pi}}\left(\widetilde{\tau}^{\,+}_m = k, \widetilde{J}_{\widetilde{\tau}^{\,+}_m} = j  \right)  \mathbb{P}\left(\widetilde{X}_{n-k}  =-x |\widetilde{J}_{0} = j	\right) \notag  \\
	&=& \mathbb{P}_{\boldsymbol{\pi}}\left(\widetilde{S}_n  = n+x-m \right) \notag \\ &&-  \sum_{k=m}^{n-1}\sum_{j=1}^N \mathbb{P}_{\boldsymbol{\pi}}\left(\widetilde{\tau}^{\,+}_m = k, \widetilde{J}_{\widetilde{\tau}^{\,+}_m} = j  \right) \mathbb{P}\left(\widetilde{S}_{n-k}  = n+x-k \big | \widetilde{J}_0 = j\right) \notag \\
	&=& \boldsymbol{\pi}\widetilde{\bold{\Lambda}}^{*n}(n+x-m)\vec{\boldsymbol{e}} -\boldsymbol{\pi}\sum_{k=m}^{n-1} \Delta_{\boldsymbol{\pi}}^{-1} \bold{V}^{\top}(k,m)\Delta_{\boldsymbol{\pi}}\widetilde{\bold{\Lambda}}^{*n-k}(n+x-k)\vec{\boldsymbol{e}}\notag \\ 
	&=& \boldsymbol{\pi}\bold{\Lambda}^{*n}(n+x-m)\vec{\boldsymbol{e}} -\boldsymbol{\pi}\sum_{k=m}^{n-1} \bold{\Lambda}^{*n-k}(n+x-k)\bold{V}(k,m)\vec{\boldsymbol{e}},\notag \\
\end{eqnarray}
where, in the last equality, we have used Eq.\,\eqref{eq:TRP} and the fact that the individual terms are scalar, i.e., equal to their own transpose. 

Hence, the finite-time survival (ruin) probability can be expressed as 
	\begin{eqnarray*} \label{eq:ruin1}
		\mathbb{P}_{\boldsymbol{\pi}, x}(\tau_0^-\geq
		n+1)&=&\sum_{m=1}^{x+n} 
		\mathbb{P}_{\boldsymbol{\pi}}\left( X_k > 0 \,\, \forall 0< k < n, X_n = m \big| X_0 = x\right) \notag \\
		&=&\sum_{m=1}^{x+n} \mathbb{P}_{\boldsymbol{\pi}}\left( \widetilde{\tau}^{\,+}_m \geq n, \widetilde{X}_n = m-x \right) \\
		&=&
		\sum_{m=1}^{x+n} \boldsymbol{\pi}\bold{\Lambda}^{*n}(n+x-m)\vec{\boldsymbol{e}} - \boldsymbol{\pi} \sum_{m=1}^{x+n}\sum_{k=m}^{n-1} \bold{\Lambda}^{*n-k}(n+x-k)\bold{V}(k,m) \vec{\boldsymbol{e}} \\
		&=&
		\sum_{i=0}^{x+n-1} \boldsymbol{\pi}\bold{\Lambda}^{*n}(i)\vec{\boldsymbol{e}} - \boldsymbol{\pi} \sum_{j=x+1}^{x+n-1}\sum_{\nu=j}^{x+n-1} \bold{\Lambda}^{*j-x}(j)\bold{V}(n+x-j,n+x-\nu) \vec{\boldsymbol{e}}. \\
	\end{eqnarray*}
	The result follows immediately by factorising both $\boldsymbol{\pi}$ and $\vec{\boldsymbol{e}}$ on the left and right respectively. 
\end{proof}

\begin{Remark}
Note that the above result includes the case $x=0$. Hence, by observing that 
\begin{equation*}
	\mathbb{P}_{\boldsymbol{\pi}}(\widetilde{\tau}^+_m \geqslant n, \widetilde{X}_n = m) = \mathbb{P}_{\boldsymbol{\pi}}(\widetilde{\tau}^+_m = n),
\end{equation*}
 comparing Corollary \ref{cor:rev} with Eq.\,\eqref{eq:genx1}, we find 
\begin{equation*}
\boldsymbol{\pi} \bold{V}(n,m)\vec{\boldsymbol{e}}	=\boldsymbol{\pi}\bold{\Lambda}^{*n}(n-m)\vec{\boldsymbol{e}} - \boldsymbol{\pi}\sum_{k=m}^{n-1}  \bold{\Lambda}^{*n-k}(n-k)\bold{V}(k,m)\vec{\boldsymbol{e}}
\end{equation*}
and thus 
\begin{equation*}
	\boldsymbol{\pi}\left(\bold{\Lambda}^{*n}(n-m) - \sum_{k=m}^{n}  \bold{\Lambda}^{*n-k}(n-k)\bold{V}(k,m)\right)\vec{\boldsymbol{e}} = 0.
	\end{equation*}
In the case of no modulation (scalar), i.e.\,$N=1$, it can be shown that $\bold{V}(k,m) = (m/k)\lambda^{*k}(k-m)$, Theorem \ref{thm:seal} reduces to Proposition 2.4 of \cite{lefevre2008finite} and the above identity reduces to Eq.\,(2.16) of the same paper.
\end{Remark}

\begin{Remark}
	The general line of logic contained within the proof above, i.e.\,expressing the survival probability as an upper hitting time for the time-reversed process, would also hold in the non-stationary case. The main issue arises with obtaining an expression for this hitting time since, in the non-stationary case, the time-reversed process $\{\widetilde{J}_n\}_{n\in \mathbb{N}}$ is a non-homogenous Markov chain and thus $\{(\widetilde{X}_k, \widetilde{J}_k)\}_{k \in \mathbb{N}}$ no longer satisfies the Markov additive property.
\end{Remark}

\section*{Acknowledgements}
Z. Palmowski acknowledges that the research is partially supported by the Polish National Science Centre Grant No. 2021/41/B/HS4/00599.

\bibliography{Bibliography(Ballot)}

\section*{Appendix}
\subsection*{A1. Proof of Proposition \ref{Prop:Increm}}

For $1 \leq m \leq n$ and $r \leq n-m$, consider an arbitrary set of ordered integers $0=i_0 < i_1 < \cdots < i_r \leq n-m$. Then, under the initial stationary distribution $\boldsymbol{\pi}$, for any $k_0, \ldots, k_{r} \in \mathbb{N}$, we have 
\begin{eqnarray} \label{eq:App1}
&&\mathbb{P}_{\boldsymbol{\pi}}\left( C_{m+i_0} = k_0, \ldots, C_{m+i_r}=k_r\right) \notag \\
&&\hspace{5mm}=
\sum_{i, j_{1} \in E} \mathbb{P}\left( C_{m+i_0} = k_0, \ldots, C_{m+i_r}=k_r | J_{m+i_0}=j_{1}, J_{m+i_0-1}=i\right) \notag \\
&&\hspace{75mm}\times \mathbb{P}_{\boldsymbol{\pi}}(J_{m+i_0}=j_1, J_{m+i_0-1}=i) \notag \\
&&\hspace{5mm}=\sum_{i, j_{1} \in E} \mathbb{P}\left( C_{m+i_0} = k_0 | J_{m+i_0}=j_{1}, J_{m+i_0-1}=i\right) \notag \\
&&\hspace{20mm}\times \mathbb{P}\left( C_{m+i_1} = k_1, \ldots, C_{m+i_r}=k_r | J_{m+i_0}=j_{1}, J_{m+i_0-1}=i\right) \notag   \\
&&\hspace{75mm}\times \mathbb{P}_{\boldsymbol{\pi}}(J_{m+i_0}=j_1, J_{m+i_0-1}=i) \notag \\
&&\hspace{5mm}=\sum_{i, j_{1} \in E} \pi_i p_{i,j_1}(k_0) \mathbb{P}\left( C_{m+i_1} = k_1, \ldots, C_{m+i_r}=k_r | J_{m+i_0}=j_{1} \right),
\end{eqnarray}
where, in the second equality, we have used the conditional independence of the random variables $\{C_k\}_{k \in \mathbb{N}^+}$. Following a similar line of logic, by conditioning on the state of the Markov chain at times $m+i_1-1$ and $m+i_1$, we have 
\begin{eqnarray}
	&&\mathbb{P}\left( C_{m+i_1} = k_1, \ldots, C_{m+i_r}=k_r | J_{m+i_0}=j_{1} \right) \notag \\
	&&\hspace{5mm}= \sum_{j_2, j_3 \in E} 	\mathbb{P}\left( C_{m+i_1} = k_1, \ldots, C_{m+i_r}=k_r | J_{m+i_1}=j_3, J_{m+i_1-1}=j_2 \right) \notag  \\
	&&\hspace{75mm}\times \mathbb{P}\left( J_{m+i_1}=j_3, J_{m+i_1-1}=j_2|J_{m+i_0}=j_1\right) \notag \\
	&&\hspace{5mm}= \sum_{j_2, j_3 \in E} \mathbb{P}\left( C_{m+i_1} = k_1| J_{m+i_1}=j_3, J_{m+i_1-1}=j_2 \right) \notag \\
	&&\hspace{25mm} \times \mathbb{P}\left( C_{m+i_2} = k_2, \ldots, C_{m+i_r}=k_r | J_{m+i_1}=j_3\right) \notag \\
	&&\hspace{75mm}\times \mathbb{P}\left( J_{m+i_1}=j_3, J_{m+i_1-1}=j_2|J_{m+i_0}=j_1\right) \notag \\ 
		&&\hspace{5mm}= \sum_{j_2, j_3 \in E} p_{j_1, j_2}^{(i_1-1-i_0)} p_{j_2, j_3}(k_1) \mathbb{P}\left( C_{m+i_2} = k_2, \ldots, C_{m+i_r}=k_r | J_{m+i_1}=j_3\right),
\end{eqnarray}
where $p_{i,j}^{(n)}$ is the n-step transition probability from state $i$ to $j \in E$. Substituting this back in to Eq.\,\ref{eq:App1}, we have 
\begin{eqnarray*}
\mathbb{P}_{\boldsymbol{\pi}}\left( C_{m+i_0} = k_0, \ldots, C_{m+i_r}=k_r\right) &=& \sum_{i,j_1, j_2, j_3 \in E} \pi_i p_{i,j_1}(k_0) p_{j_1, j_2}^{(i_1-1-i_0)} p_{j_2, j_3}(k_1) \\
&&\hspace{10mm}\times \mathbb{P}\left( C_{m+i_2} = k_2, \ldots, C_{m+i_r}=k_r | J_{m+i_1}=j_3\right).
\end{eqnarray*}
Finally, it is not hard to see that repeating this process yields 
\begin{eqnarray*}
	\mathbb{P}_{\boldsymbol{\pi}}\left( C_{m+i_0} = k_0, \ldots, C_{m+i_r}=k_r\right) &=& \sum_{i,j_1, \cdots, j_{2r}, \ell \in E} \pi_i p_{i,j_1}(k_0) p_{j_1, j_2}^{(i_1-1-i_0)} p_{j_2, j_3}(k_1) \cdots p_{j_{2r-1},j_{2r}}^{(i_r-1-i_{r-1})}p_{j_{2r},\ell}(k_r)
\end{eqnarray*} 
which is independent of $m$ and thus, holds for any $1\leq m \leq n$. It is also worth noting that a similar argument holds for general jump sizes $\{C_k\}_{k \in \mathbb{N}^+}$, not just those with integer support. This completes the proof. 

\subsection*{A2. $\bold{Q}(n,a)$ for $N=1$}

First note that when $N=1$, $\bold{Q}(n,a)$ defined in Eq.\eqref{eq:G}, reduces to 
\begin{equation*}
	\bold{Q}(n,a) = \frac{1}{n!} b(n, a),
\end{equation*}
where 
\begin{eqnarray*}
	b(n,a) &=& \frac{\partial^n}{\partial z^n} \left[z^a \widehat{\lambda}(z)^n \left(1-\frac{z\widehat{\lambda}'(z)}{\widehat{\lambda}(z)}\right) \right]_{z=0} \\
	&=& \frac{\partial^n}{\partial z^n} \left[z^a \left(\widehat{\lambda}(z)^n -z\widehat{\lambda}(z)^{n-1}\widehat{\lambda}'(z)\right) \right]_{z=0} \\
	&=& \frac{\partial^n}{\partial z^n} \left[z^a \left(\widehat{\lambda}(z)^n -n^{-1}z \frac{\partial}{\partial z}\left(\widehat{\lambda}(z)^n\right)\right) \right]_{z=0}.
\end{eqnarray*}
Applying general Leibniz (product) rule gives
\begin{eqnarray*}
	b(n,a) &=& \left[\sum_{i=0}^n \left(\begin{array}{c} n \\ i \end{array} \right) \frac{\partial^i }{\partial z^i } (z^a) \times \frac{\partial^{n-i}}{\partial z^{n-i}} \left(\widehat{\lambda}(z)^n -n^{-1}z \frac{\partial}{\partial z}\left(\widehat{\lambda}(z)^n\right) \right) \right]_{z=0} \\
	&=& \left[\sum_{i=0}^n \left(\begin{array}{c} n \\ i \end{array} \right) \frac{\partial^i }{\partial z^i } (z^a) \left[ \frac{\partial^{n-i}}{\partial z^{n-i}} \left(\widehat{\lambda}(z)^n\right) -n^{-1}(n-i) \frac{\partial^{n-i}}{\partial z^{n-i}}\left(\widehat{\lambda}(z)^n\right) \right) \right]_{z=0} \\
	&=& \left(\begin{array}{c} n \\ a \end{array} \right) a!  \left[ \frac{\partial^{n-a}}{\partial z^{n-a}} \left(\widehat{\lambda}(z)^n\right) -n^{-1}(n-a) \frac{\partial^{n-a}}{\partial z^{n-a}}\left(\widehat{\lambda}(z)^n\right) \right]_{z=0} \\
	&=&  \frac{a}{n}\frac{n!}{(n-a)!} \left[\frac{\partial^{n-a}}{\partial z^{n-a}} \left(\widehat{\lambda}(z)^n\right)\right]_{z=0} \\
	&=& \frac{a}{n}\frac{n!}{(n-a)!} (n-a)! \lambda ^{*n}(n-a) \\
	&=& \frac{a}{n}\lambda^{*n}(n-a),
\end{eqnarray*}
where $\lambda^{*n}(\cdot)$ denotes the $n$-th fold convolution of $\lambda(\cdot)$.

\end{document}